\numberwithin{equation}{section}
\newtheorem{theorem}{Theorem}[section]
\newtheorem{corollary}{Corollary}[theorem]
\newtheorem{lemma}[theorem]{Lemma}
\newtheorem*{remark}{Remark}
\newtheorem{definition}[theorem]{Definition}
\newcommand{\upd}{\mathrm{d}}  
\def\a{\alpha }     \def\b{\beta  }         
     \def\d{\delta}          
    \def\ve{\varepsilon}    \def\z{\zeta }
       \def\th{\theta }        
\def\k{\kappa }          \def\l{\lambda }    
\def\L{\Lambda }    \def\m{\mu}                      
                \def\s{\sigma }     
                \def\u{\upsilon }   
   \def\f{\phi }                
       \def\w{\omega }
\title{Topologically protected modes in dispersive materials: the case of undamped systems}
\author{Konstantinos Alexopoulos\thanks{Department of Mathematics, ETH Zurich, R\"amistrasse 101, CH-8092 Zurich, Switzerland.} \and Bryn Davies\thanks{Department of Mathematics, Imperial College London, 180 Queen's Gate, London SW7 2AZ, United Kingdom.}}
\date{}
\begin{document}

\maketitle

\begin{abstract}
    This work extends the theory of topological protection to dispersive systems. This theory has emerged from the field of topological insulators and has been established for continuum models in both classical and quantum settings. It predicts the existence of localised interface modes based on associated topological indices and shows that, when such modes exist, they benefit from enhanced robustness with respect to imperfections. This makes topologically protected modes an ideal starting point for building wave guiding devices. However, in many practical applications such as optics or locally resonant meta-structures, materials are dispersive in the operating frequency range. In this case, the associated spectral theory is less straightforward. This work shows that the existing theory of topological protection can be extended to dispersive settings. We consider time-harmonic waves in one-dimensional systems with no damping.

\end{abstract}

\tableofcontents

\section{Introduction}

The existence of strongly localised existence eigenmodes in perturbed periodic media is one of the foundations of modern wave physics. Such modes allow for waves of specific frequencies to be strongly localised at desired locations and are the starting point for the design of many wave guiding and control devices.  

The pre-eiminent theory in the field of periodic waveguides is the notion of topological protection \cite{khanikaev2013photonic}. This field was inspired by ideas related to the famous quantum hall effect \cite{klitzing1980QHE} and was successfully translated to classical wave systems around 15 years ago \cite{wang2009observation}. This theory uses topological indices associated to the underlying periodic structures \cite{asboth2016short} to predict the existence of localised edge modes at interfaces and edges of the material. Further, the invariant nature of topological indices means we can expect that such modes experience enhanced robustness with respect to imperfections. Given how desirable a priori robustness properties are for being able to confidently manufacture functional devices, topological waveguides has emerged as an important sub-field of wave physics \cite{PriceRoadmap}.

Alongside the rapid developments in the physical literature, a corresponding mathematical theory for topologically protected modes has been developed. This theory first emerged in the setting of the Schr\"odinger equation \cite{fefferman2014topologically, fefferman2017topologically}, but has since been extended to classical wave systems \cite{lin2022mathematical, coutant2023surface}. Many of the seminal mathematical studies consider one-dimensional systems, since the fundamental mechanism requires an interface or edge to be introduced in just one axis of periodicity. However, multi-dimensional systems can be studied by using \emph{e.g.} integral operators \cite{ammari2022robust, ammari2020topologically} or a reduction to Dirac operators \cite{fefferman2014wave, drouot2019bulk, ammari2020high}.

The existing mathematical theory mostly studies systems with frequency-independent material parameters. This is not only a natural toy model but gives a good description of several physical settings (\emph{e.g.} microwaves and perfect conductors). However, many important materials are dispersive in desirable frequency ranges. This includes locally resonant systems (\emph{e.g.} coupled Helmholtz resonators \cite{zhao2021subwavelength}) and most metals at optical frequencies. This work extends the mathematical theory of topologically protected modes to dispersive settings. The main theoretical challenge for doing so is that the spectrum of the differential operator can no longer be understood using standard linear eigenvalue theory. Instead, the spectrum is obtained through the solution of a non-linear eigenvalue problem. Further, many important dispersive materials have poles and singularities at certain frequencies \cite{alexopoulos2023effect, touboul2023high}.

In this work, we will focus on materials whose permittivity is dispersive and real-valued (meaning there is no damping in the system). We exploit the work of \cite{alexopoulos2023effect} which characterised the Bloch spectrum as a function of the singularities of canonical dispersive permittivities. The main idea of this work is to combine this theory with the work of \cite{coutant2023surface} (and the previous works upon which it builds, \emph{e.g.} \cite{fefferman2014topologically, lin2022mathematical}), which uses the surface impedance to characterise the existence and topological protection of interface modes.

In Section \ref{section:MS}, we establish the mathematical setting describing the problem and the geometry governing the system. In Section \ref{section:IME}, we define the concept of the impedance function and, exploiting the properties of the mirror symmetry of the periodically repeated cells, we generalise the monotonicity argument used in \cite{coutant2023surface} and we show that in each band-gap, there exists a unique non-zero frequency for which an interface mode exists. Then, in Section \ref{section:ZP}, we define the Zak phase and we show why it is an invariant property of the system. In Section \ref{section:AB}, using the transfer matrix method, we show the existence of an asymptotically decaying mode at infinity and we provide the condition that needs to be satisfied for that mode to unique. Finally, in Section \ref{section:TP}, we study how this interface mode is affected by perturbations in the material parameters or the symmetry of the system.

\section{Mathematical setting}\label{section:MS}

Let us first present the mathematical setting in which we will work. We will consider materials composed of periodically repeating unit cells, within which the permittivity is allowed to vary as a function of both frequency and position. A crucial assumption will be that each unit cell is mirror symmetric. That is, the material's permittivity is described by a function $\epsilon(x,\omega)$ which is periodic ($\epsilon(x,\cdot)=\epsilon(x+1,\cdot)$) and symmetric ($\epsilon(x+h,\cdot)=\epsilon(x+1-h,\cdot)$) in its first variable. Further, our analysis will require an assumption that $\epsilon(\cdot,\omega)$ is a piecewise non-decreasing function of $\omega$, in the sense that $\frac{\upd\ve}{\upd\w}\geq0$ whenever the derivative exists.

In what follows, in order to ease the notation, our setting will be constituted by frequency dispersive permittivities, piecewise constant with respect to the space variable. Although, we note that our analysis is the same and the results still hold for permittivities which respect the mirror symmetry as mentioned above.

We consider two dispersive materials $A$ and $B$. Each one is of the form of a semi-infinite array. The arrays are glued together at $x_0=0$ and we assume that the material $A$ expands towards $-\infty$ and the material $B$ expands towards $+\infty$. In addition, we assume that each material is constructed by repeating periodically a unit cell. Each unit cell is the product of layering, i.e. it has particles of two different permittivities $\ve_1$ and $\ve_2$. A key assumption is that each permittivity is a dispersive, continuous function of the frequency $\w\in\mathbb{R}$ of the system. Let us denote by $D^{[1]}_i$, $i=1,2,\dots,N+1,$ and by $D^{[2]}_j$, $j=1,2,\dots,N,$ the particles of the unit cell with permittivity $\ve_1$ and $\ve_2$, respectively. Then, the permittivity of the system is defined as follows:
\begin{align}\label{def:permittivity}
    \ve(x,\w) = 
    \begin{cases}
        \ve_1(\w), & x \in D^{[1]} := \bigcup_{i=1}^{N+1} D^{[1]}_i,\\
        \ve_2(\w), & x \in D^{[2]} := \bigcup_{i=1}^N D^{[2]}_i.
    \end{cases} 
\end{align}

We define the sequence $\{x_n\}_{n\in\mathbb{Z}}$ to be the set of endpoints of each one of the periodically repeated cells. We take $x_0$ to be the glue point of the two materials and so for $n>0$ we are in material $B$ and for $n<0$, we are in material $A$.

In Figure \ref{fig:materials_AandB} we provide a schematic depiction of such an example of periodic cells for the materials $A$ and $B$, respectively. Concerning the construction characteristics of the periodic cell, for materials $A$ and $B$, we have $3$ particles of permittivity $\ve_1$, i.e. $D^{[1]}_1,D^{[1]}_2$ and $D^{[1]}_3$, and $2$ particles of permittivity $\ve_2$, i.e. $D^{[2]}_1$ and $D^{[2]}_2$. We note that our result can be generalised in settings where the number of particles in the periodic cells is greater. We also notice the geometry of each one of the periodic cells, i.e. the symmetric way in which the particles are placed in each periodic cell.

\begin{figure}
    \centering
    \begin{tikzpicture}
        \node[inner sep=0pt] (russell) at (-4.5,6.5) {\includegraphics[width=0.45\textwidth]{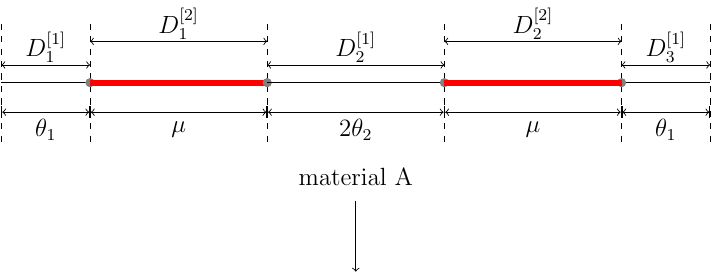}};
        \node[inner sep=0pt] (russell) at (4.5,6.5) {\includegraphics[width=0.45\textwidth]{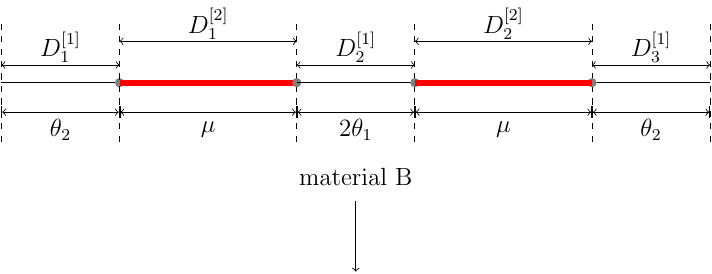}};
        \node[inner sep=0pt] (russell) at (0,4) {\includegraphics[width=1.05\textwidth]{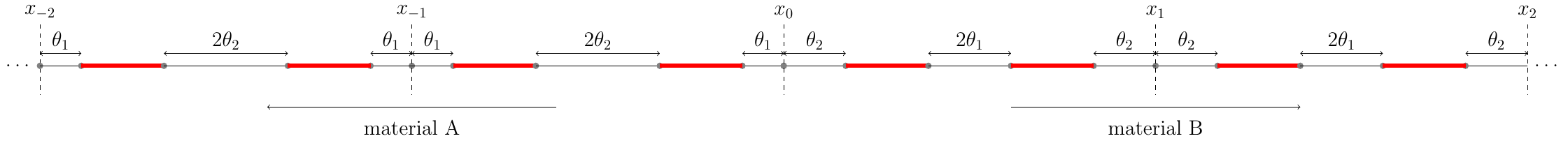}};
    \end{tikzpicture}
    \caption{An example of the setting studied. In the unit cell of each material we have 3 particles of permittivity $\ve_1$ and 2 particles of permittivity $\ve_2$. We notice the mirror symmetric way in which the particles are placed inside the periodic cells. Each material is constituted by a semi-infinite array created by periodically repeating the unit cell. Our structure is the result of gluing materials $A$ and $B$ at the point $x_0$.} 
    \label{fig:materials_AandB}
\end{figure}

We are interested in finding eigenvalues 
\begin{align}\label{eq:Helmholtz}
    \mathcal{L}u=\omega^2u
\end{align}
of the ordinary differential operator
\begin{equation}\label{def:L}
    \mathcal{L}u := - \frac{1}{\mu_0}\frac{\upd }{\upd x} \left( \frac{1}{\ve(x,\w)} \frac{\upd u}{\upd x} \right).
\end{equation}
In particular, the eigenmodes of interest are those which decay as $|x|\rightarrow\infty$, such that they are localised in a neighbourhood of the interface at $x_0=0$. 

To find candidate eigenvalues $\omega^2$ for which localised eigenmodes can occur, it is valuable to consider the Floquet-Bloch spectrum of the operators associated to materials A and B. This is the set of eigenmodes which satisfy the Floquet-Bloch boundary conditions
\begin{align}\label{bd1}
    u(x+1) = e^{i\k}u(x)\quad\text{and}\quad \frac{\upd u}{\upd x}(x+1) = e^{i\k}\frac{\upd u}{\upd x}(x),
\end{align}
for some $\k\in\mathcal{B}:=[-\pi,\pi]$. Hence, these eigenmodes belong to the space of functions
\begin{align*}
    L^2_{\k} := \Big\{ f\in L^2_{\mathrm{loc}}: \ f(x+1) = e^{i\k} f(x), \ \k\in\mathbb{R} \Big\}.
\end{align*}

It follows that the localised eigenmodes that we are looking for, have to be in band gaps of the two materials. This happens because we are looking for solutions to \eqref{def:L}, which decay as $|x|\to+\infty$. Thus, if the modes are in the bands, then from \eqref{bd1}, we see that their magnitude is preserved. Let us also note here that we are interested in materials $A$ and $B$ which have overlapping band gaps, otherwise such modes cannot exist in this setting.

On its own, studying the spectrum of the operator $\mathcal{L}$ is a complicated task. This is due to the fact that there is a non-linear dependence of the operator on the eigenvalues $\w^2$ via $\ve(x,\w)$. In addition, the permittivity function $\ve$ may present certain aspect, e.g. poles, which make this analysis even harder. In \cite{alexopoulos2023effect}, we have studied how these properties of the permittivity function affect the structure of the spectrum of the differential operator $\mathcal{L}$. In general, its spectrum will consist of countably many spectral bands. Unusual behaviour can occur near to the poles of the singularity, however, where there can be a countably infinite number of bands clustered within a finite region of the pole.

We will argue in the same way as in \cite{coutant2023surface}. We will generalise their results to our dispersive setting (which differs from their dispersive example as the dispersion appears in the electric permittivity, rather than the magnetic permeability, in \eqref{def:L}). We will include proofs of most statements for completeness, although some are straighforward generalisations of \cite{coutant2023surface}. We also note that in the case where the permittivity $\ve$ no longer depends on the frequency $\w$ but only on $x$, then we are in a similar setting as in \cite{lin2022mathematical}.

\section{Interface mode existence}\label{section:IME}


The main theoretical challenge is to show that an interface mode exists for the structure composed of the two materials.


\subsection{Impedance functions}

Let us see a sufficient condition that needs to be satisfied for the interface mode to exist. We introduce the following notation.

\begin{definition}
    Let $f:\mathbb{R}\to\mathbb{C}$ be a continuous function and let $\a\in\mathbb{R}$. Then, we define
    \begin{align*}
        f(\a^{-}) := \lim_{x\uparrow\a} f(x) \ \ \ \text{ and } \ \ \ f(\a^{+}) := \lim_{x\downarrow\a} f(x).
    \end{align*}
\end{definition}

The condition that needs to be satisfied for the interface mode to exist is continuity of both the solution and its derivative at the interface. Using the above notation, the continuity conditions that needs to be satisfied at the interface $x_0$ can be written as
\begin{align}\label{eq:continuityconditions}
    u(x_0^{-}) = u(x_0^{+}) \ \ \ \text{ and } \ \ \ \left(\frac{1}{\ve_1(\w)}\frac{\upd u}{\upd x}\right)(x_0^{-}) =  \left(\frac{1}{\ve_1(\w)}\frac{\upd u}{\upd x}\right)(x_0^{+}).
\end{align}

We can now give the definition for the impedance functions.

\begin{definition}[Surface impedances]
    We define the surface impedances of the problem \eqref{eq:Helmholtz}, as follows:
    \begin{align}
        Z^-(\w)= -\frac{u(x_0^{-})}{\frac{1}{\ve_1(\w)}\frac{\upd}{\upd x}u(x_0^{-})} \ \ \ \text{ and } \ \ \ Z^+(\w)= \frac{u(x_0^{+})}{\frac{1}{\ve_1(\w)}\frac{\upd}{\upd x}u(x_0^{+})}, \ \ \ \w\in\mathbb{R}.
    \end{align}
\end{definition}

Then, the following result is a direct consequence of the continuity conditions \eqref{eq:continuityconditions}.

\begin{lemma}\label{lemma:impedance}
    Let $\mathfrak{A}\subset\mathbb{R}$. Then, a necessary and sufficient conditions for the existence of an interface localised mode in $\mathfrak{A}$ is
    \begin{align}\label{eq:impedancecondition}
        Z^+(\w) + Z^-(\w) = 0, \ \ \ \w\in\mathfrak{A}.
    \end{align}
\end{lemma}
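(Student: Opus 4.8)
The plan is to reduce the existence of an interface mode to a $2\times2$ homogeneous linear system at the glue point $x_0$ and to recognise its solvability condition as \eqref{eq:impedancecondition}. First I would fix $\w\in\mathfrak{A}$, taken inside a common band gap of the two materials, so that on each semi-infinite side the eigenvalue equation \eqref{eq:Helmholtz} admits, up to a multiplicative constant, a unique solution decaying at the corresponding infinity; call these $u^-$ (decaying as $x\to-\infty$, in material $A$) and $u^+$ (decaying as $x\to+\infty$, in material $B$). This one-dimensionality of the decaying subspace is exactly what makes the surface impedances $Z^{\pm}(\w)$ well defined independently of normalisation, since they are ratios of $u^{\pm}$ and its flux $\frac{1}{\ve_1(\w)}\frac{\upd u^{\pm}}{\upd x}$ at $x_0^{\pm}$. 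Any candidate localised mode must therefore agree with $c^- u^-$ on $\{x<0\}$ and with $c^+ u^+$ on $\{x>0\}$ for some constants $c^{\pm}$, and it is a genuine (non-trivial) interface mode precisely when $(c^-,c^+)\neq(0,0)$.

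Next I would impose the matching conditions \eqref{eq:continuityconditions}. Writing $a_{\pm}:=u^{\pm}(x_0^{\pm})$ and $b_{\pm}:=\frac{1}{\ve_1(\w)}\frac{\upd u^{\pm}}{\upd x}(x_0^{\pm})$, continuity of the field and of the flux becomes the homogeneous linear system
\begin{align*}
    c^- a_- - c^+ a_+ = 0, \qquad c^- b_- - c^+ b_+ = 0.
\end{align*}
A non-trivial pair $(c^-,c^+)$ exists if and only if the coefficient matrix is singular, i.e.
\begin{align*}
    \det\begin{pmatrix} a_- & -a_+ \\ b_- & -b_+ \end{pmatrix} = a_+ b_- - a_- b_+ = 0.
\end{align*}
Dividing by $b_- b_+$, which is permissible whenever neither flux vanishes, and recalling $Z^-(\w)=-a_-/b_-$ together with $Z^+(\w)=a_+/b_+$, this reads
\begin{align*}
    \frac{a_+}{b_+} - \frac{a_-}{b_-} = 0 \quad\Longleftrightarrow\quad Z^+(\w)+Z^-(\w)=0,
\end{align*}
which is exactly \eqref{eq:impedancecondition}. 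Since each step is an equivalence, this delivers both implications at once: the determinant condition is necessary and sufficient for a non-trivial matched, hence localised, solution, and it coincides with the impedance identity.

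The only genuinely delicate points are the justification that the decaying subspace on each side is one-dimensional for $\w\in\mathfrak{A}$ (so that the impedances are well defined and the reduction to two scalars is legitimate) and the degenerate case in which a flux $b_{\pm}$ vanishes, where the impedance formula must be read projectively. For the former I would appeal to the band-gap and transfer-matrix description developed in Section \ref{section:AB}, where the decay of the Bloch solutions in a gap forces a single admissible exponent per semi-infinite array. For the latter I would retain the determinant form $a_+ b_- - a_- b_+ = 0$, which is equivalent to \eqref{eq:impedancecondition} away from the poles of $Z^{\pm}$ and handles the vanishing-flux configuration without any division. The main obstacle is therefore not the algebra, which is a Wronskian-type matching computation, but securing the uniqueness of the decaying mode on which the very definition of $Z^{\pm}$ rests.
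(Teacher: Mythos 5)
Your proof is correct and follows essentially the same route as the paper, which gives no formal proof at all but simply asserts the lemma as a ``direct consequence'' of the continuity conditions \eqref{eq:continuityconditions}: one matches the decaying solutions from each side at $x_0$ and observes that nontrivial solvability of the matching is exactly $Z^+(\w)+Z^-(\w)=0$. Your explicit $2\times2$ determinant formulation, together with the remarks on the one-dimensionality of the decaying subspaces in a band gap and on the vanishing-flux case, supplies precisely the details the paper leaves implicit.
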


The question that rises naturally is whether a frequency $\w\in\mathbb{R}$ exists such that this condition is satisfied and so, the global mode exists.

\subsection{Dispersion relation}

The quasiperiodic Helmholtz problem \eqref{def:L}-\eqref{bd1} is characterised by a dispersion relation. This is an expression which relates the quasiperiodicity $\k$ with the frequency $\w$, such that it describes the spectral bands $\omega=\omega_n(\k)$. It is an equation of the form:
\begin{align}\label{eq:disp_rel}
    2\cos(\k) = f(\w),
\end{align}
where $f:\mathbb{R}\rightarrow\mathbb{R}$ is a function that depends on the material parameters and the system's geometry. This function is variously known as the discriminant or Lyapunov function of the operator $\L$ \cite{kuchment2016overview}. Some examples are provided in \emph{e.g.} \cite{alexopoulos2023effect, morini2018waves}.

The analysis that follows stems from understanding the symmetries of the Bloch modes at the edged of the band gaps. Hence, it will be helpful to know that the maxima and minima of any band must occur at $\kappa=0$ or $\kappa=\pm\pi$. This follows from the fact that the spectral bands are monotonic functions of $\k$ within the reduced Brillouin zone $[0,\pi]$.

\begin{lemma}[Frequency monotonicity]
    It holds that the frequency $\w\in\mathbb{R}$ satisfying the \eqref{eq:disp_rel}, as a function of the quasiperiodicity $\k$, is a monotonic function in the reduced Brillouin zone $[0,\pi]$.
\end{lemma}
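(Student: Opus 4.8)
The plan is to exploit the strict monotonicity of $\k\mapsto 2\cos\k$ on $[0,\pi]$, which is a decreasing bijection onto $[-2,2]$. By the dispersion relation \eqref{eq:disp_rel}, the band function $\w=\w_n(\k)$ is obtained by inverting $2\cos\k=f(\w)$, so it suffices to show that $f$ is strictly monotonic on each spectral band -- equivalently, that $f'(\w)\neq0$ and keeps a constant sign on the interior of each band, where $f(\w)\in(-2,2)$. If this holds, then on the interior $\w_n$ is a strictly monotone function of $2\cos\k$ and hence of $\k$, and monotonicity extends to the closed interval by continuity.

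To control $f'$, I would freeze the frequency appearing in the permittivity. For a parameter $\nu\in\mathbb{R}$ consider the linear Bloch eigenvalue problem
\begin{equation*}
    -\frac{1}{\mu_0}\frac{\upd}{\upd x}\!\left(\frac{1}{\ve(x,\nu)}\frac{\upd u}{\upd x}\right)=\L u,\qquad u(x+1)=e^{i\k}u(x),
\end{equation*}
whose discriminant I denote $\Delta(\L;\nu)$ and whose band functions are $\L_n(\k,\nu)$; the true dispersion relation is recovered from the self-consistency $\L=\w^2$, $\nu=\w$, i.e. $f(\w)=\Delta(\w^2;\w)$ and $\L_n(\k,\w_n(\k))=\w_n(\k)^2$. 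For this frozen problem I would invoke the classical one-dimensional Floquet fact that the discriminant is strictly monotonic in $\L$ on the interior of each band \cite{kuchment2016overview}; equivalently $\partial_\k\L_n$ is nonzero and of constant sign on $(0,\pi)$, which is exactly the band monotonicity quoted for the non-dispersive setting.

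The dispersive ingredient is the sign of $\partial_\nu\L_n$. Differentiating the eigenvalue via the Hellmann--Feynman formula on a normalised eigenfunction $u_n$, the boundary contributions cancel because of the quasiperiodic conditions \eqref{bd1}, and one obtains
\begin{equation*}
    \partial_\nu\L_n=-\frac{1}{\mu_0}\frac{\int (\partial_\nu\ve)\,\ve^{-2}\,|u_n'|^2\,\upd x}{\int|u_n|^2\,\upd x}\le0,
\end{equation*}
where the sign is precisely the consequence of the standing assumption $\frac{\upd\ve}{\upd\w}\ge0$ (note $\ve^{-2}>0$ wherever $\ve\neq0$). Differentiating the self-consistency relation $\L_n(\k,\w_n(\k))=\w_n(\k)^2$ in $\k$ then yields
\begin{equation*}
    \frac{\upd\w_n}{\upd\k}=\frac{\partial_\k\L_n}{2\w_n-\partial_\nu\L_n}.
\end{equation*}

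For positive frequencies the denominator equals $2\w_n+|\partial_\nu\L_n|>0$, so $\frac{\upd\w_n}{\upd\k}$ inherits the constant sign of $\partial_\k\L_n$ on $(0,\pi)$, giving the claim. I expect the main obstacle to be justifying the two frozen-operator facts in the genuinely dispersive regime: the form $\int\ve^{-1}|u'|^2$ need not be sign-definite when $\ve<0$, so both the classical band-monotonicity statement and the simplicity of eigenvalues (required for the Hellmann--Feynman step) must be argued on the interior of each band, where real Bloch modes exist, and only then passed to the band edges $\k=0,\pm\pi$ by continuity. I would also record that the positivity $\w_n>0$ is exactly what fixes the sign of the denominator; the negative-frequency branch, if relevant, is handled by the evenness of the problem in $\w$.
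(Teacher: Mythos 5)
Your proposal takes a genuinely different route from the paper, and your two key computations (the Hellmann--Feynman sign $\partial_\nu\L_n\le0$ and the implicit-function identity $\frac{\upd\w_n}{\upd\k}=\partial_\k\L_n/(2\w_n-\partial_\nu\L_n)$) are correct. But the proof is not complete as it stands, and the incompleteness is not merely technical. The paper's own proof is a two-line chain-rule argument: differentiating \eqref{eq:disp_rel} along a band gives $-2\sin\k=\frac{\upd f}{\upd\w}\frac{\upd\w}{\upd\k}$, and since the left-hand side is nonzero on $(0,\pi)$, $\frac{\upd\w}{\upd\k}$ can never vanish there and therefore keeps one sign. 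That argument uses no spectral theory, no positivity of $\w$, and --- notably --- none of the standing assumption $\frac{\upd\ve}{\upd\w}\ge0$ (cf.\ the remark following the lemma: the property holds for any sufficiently smooth $f$). Your route instead rests on two frozen-operator facts that you explicitly defer: (i) constant sign of $\partial_\k\L_n$ on $(0,\pi)$ for the frozen problem, and (ii) simplicity and smoothness of the branch $\L_n(\k,\nu)$, needed for the Hellmann--Feynman step. In the regime this paper is written for, these are exactly the nontrivial content: the permittivities considered (e.g.\ \eqref{def:ex:permittivity}) change sign near their poles, so the frozen operator has an indefinite leading coefficient and is not semibounded, and the textbook Floquet results you invoke are not citable off the shelf. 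A proof that defers (i) and (ii) in this setting has a genuine gap.

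The gap is fillable, but note that your stated worry points at the wrong object. The classical proof that the frozen discriminant satisfies $\partial_\L\Delta\ne0$ inside a band does not use definiteness of the form $\int\ve^{-1}|u'|^2$; it uses positivity of the coefficient of the \emph{spectral parameter}, here $\m_0$, through the identity $\partial_\L\Delta=-\int_0^1\m_0\,\big[T(x)\,T(1)\,T(x)^{-1}\big]_{12}\,\upd x$ together with the observation that a real $2\times2$ matrix $B$ with $\det B=1$ and $|\mathrm{tr}\,B|<2$ has $B_{12}B_{21}<0$, so the integrand never vanishes and keeps a fixed sign in $x$. Likewise (ii) holds on $(0,\pi)$ because the real monodromy matrix cannot equal $e^{\pm i\k}I$ there, so the quasiperiodic eigenvalues are simple and the branch continues analytically across the interior of the band. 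With these two points written out, your argument closes, and it in fact yields more than the lemma: since $2\w_n-\partial_\nu\L_n>0$, dispersion cannot reverse the sign of the group velocity relative to the frozen band. The trade-off is that this stronger conclusion genuinely needs $\w_n>0$ and $\frac{\upd\ve}{\upd\w}\ge0$, while the statement being proved needs neither --- which is why the paper's direct differentiation is the more efficient and more general proof of the lemma as stated.
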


\begin{proof}
    Indeed, we differentiate \eqref{eq:disp_rel} with respect to the quasiperiodicity $\k$ and we get
    \begin{align*}
        -2\sin(\k) = \frac{\upd f}{\upd\w}\frac{\upd\w}{\upd\k}.
    \end{align*}
    We know that in $[0,\pi]$, it holds $-2\sin(\k)\leq0$ with the zeros occurring in $\{0,\pi\}$. It is a direct consequence of this that $\frac{\upd\w}{\upd\k}$ keeps a constant sign in $(0,\pi)$. Also, if $\frac{\upd\w}{\upd\k}(\k)=0$, then $\k\in\{0,\pi\}$. Thus, $\w(\k)$ is monotonic in the reduced Brillouin zone. This concludes the proof.
\end{proof}

\begin{remark}
    We note that the above property holds for any sufficiently smooth function $f:\mathbb{R}\to\mathbb{R}$.
\end{remark}

Thus, since $\w$ is a monotonic function of $\k$ in the reduced Brillouin zone, it follows that the edges of the spectrum are at $0$ and $\pi$.

\subsection{Mirror symmetry}

Let us now focus on the geometry inside the periodic cells. The first thing we observe is the mirror symmetry which is established in the periodic cells of both materials. Indeed, we observe that for $x\in[0,1]$, we have
\begin{align}
    \ve(x,\w) = \ve(1-x,\w).
\end{align}
We introduce the parity operator $\mathcal{P}$, which acts on a function $f$ as follows:
\begin{align*}
    (\mathcal{P}f)(x) := f(1-x).
\end{align*}
Then, the mirror symmetry implies
\begin{align}\label{eq:paritysymmetry}
    \mathcal{P}\mathcal{L}(-\k,\w) = \mathcal{L}(\k,\w) \mathcal{P}, \ \ \ \forall \k\in\mathcal{B}.
\end{align}
As a direct consequence of \eqref{eq:paritysymmetry}, we have the following lemmas.

\begin{lemma}
    Let $\w\in\mathbb{R}$. Then:
    \begin{itemize}
        \item $\mathcal{L}(-\pi,\w)=\mathcal{L}(\pi,\w)$.
        \item The differential operator $\mathcal{L}$ and the parity operator $\mathcal{P}$ commute at $\k\in\{0,\pm\pi\}$.
    \end{itemize}
\end{lemma}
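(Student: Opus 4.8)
The plan is to derive both statements directly from the parity intertwining relation \eqref{eq:paritysymmetry}, the only additional ingredient being the structure of the quasiperiodic boundary conditions \eqref{bd1} at the edge of the Brillouin zone. No genuinely new computation is needed; the work is in setting up the bookkeeping cleanly.

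First I would establish the identity $\mathcal{L}(-\pi,\w) = \mathcal{L}(\pi,\w)$. The differential \emph{expression} in \eqref{def:L} carries no dependence on $\k$; the quasiperiodicity enters the eigenvalue problem \eqref{eq:Helmholtz} only through the domain of the operator, namely the space $L^2_{\k}$ together with the boundary conditions \eqref{bd1}, both of which depend on $\k$ solely via the factor $e^{i\k}$. Since $e^{i\pi} = e^{-i\pi} = -1$, the conditions \eqref{bd1} imposed at $\k = \pi$ and at $\k = -\pi$ are literally identical, so $L^2_{\pi} = L^2_{-\pi}$ and the two operators coincide, having the same action and the same domain. This proves the first bullet.

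For the second bullet I would evaluate \eqref{eq:paritysymmetry} at the three special quasimomenta. At $\k = 0$, since $-0 = 0$, the relation reads $\mathcal{P}\mathcal{L}(0,\w) = \mathcal{L}(0,\w)\mathcal{P}$, which is exactly commutation. At $\k = \pi$, the relation gives $\mathcal{P}\mathcal{L}(-\pi,\w) = \mathcal{L}(\pi,\w)\mathcal{P}$, and substituting the identity just proven, $\mathcal{L}(-\pi,\w) = \mathcal{L}(\pi,\w)$, yields $\mathcal{P}\mathcal{L}(\pi,\w) = \mathcal{L}(\pi,\w)\mathcal{P}$. The case $\k = -\pi$ follows identically, or directly from the first bullet, which identifies it with $\k = \pi$.

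I do not anticipate a genuine obstacle; the one point warranting care is conceptual rather than computational, namely making precise the assertion that $\mathcal{L}(-\pi,\w)$ and $\mathcal{L}(\pi,\w)$ are the \emph{same} operator. One must be explicit that the $\k$-dependence is confined to the domain and the boundary data, so that equality of the factor $e^{i\k}$ at $\k = \pm\pi$ genuinely forces equality of the operators. The broader principle underlying the lemma is that $\mathcal{P}$ and $\mathcal{L}(\k,\w)$ commute precisely at the high-symmetry points of the Brillouin zone, that is, the quasimomenta fixed under $\k \mapsto -\k$ modulo $2\pi$, which are exactly $\k = 0$ and $\k \equiv \pm\pi$.
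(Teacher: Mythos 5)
Your proof is correct, and on the second bullet it coincides with the paper's argument: evaluate \eqref{eq:paritysymmetry} at $\k=0$ to get commutation immediately, then at $\k=\pm\pi$ combine $\mathcal{P}\mathcal{L}(-\pi,\w)=\mathcal{L}(\pi,\w)\mathcal{P}$ with the identification $\mathcal{L}(-\pi,\w)=\mathcal{L}(\pi,\w)$. Where you genuinely differ is the first bullet. The paper justifies $\mathcal{L}(-\pi,\w)=\mathcal{L}(\pi,\w)$ by invoking the band symmetry $\w_n(-\k)=\w_n(\k)$ together with \eqref{def:L}; this is a statement about spectra, and equality of spectra does not by itself give equality of operators, so the paper's justification is loose at this point. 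Your argument goes to the root of the matter: the differential expression in \eqref{def:L} carries no $\k$-dependence at all, the quasimomentum enters only through the domain via the factor $e^{i\k}$ in \eqref{bd1}, and since $e^{i\pi}=e^{-i\pi}=-1$ the spaces $L^2_{\pi}$ and $L^2_{-\pi}$ and the boundary conditions literally coincide, hence so do the operators. This is both more elementary and more rigorous than the paper's appeal to band symmetry, and it makes transparent the closing observation you add, namely that commutation with $\mathcal{P}$ holds precisely at the fixed points of $\k\mapsto-\k$ modulo $2\pi$.
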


\begin{proof}
    We divide this proof in two parts, one for each result.
    \begin{itemize}
    
        \item We know that $\w_n(-\k)=\w_n(\k)$. Then, from \eqref{def:L}, it follows directly that $\mathcal{L}(-\pi,\w)=\mathcal{L}(\pi,\w)$.
        
        \item From \eqref{eq:paritysymmetry}, we get
        \begin{align*}
            \mathcal{P}\mathcal{L}(0,\w) = \mathcal{L}(0,\w) \mathcal{P}.
        \end{align*}
        We also get
        \begin{align*}
            \mathcal{P}\mathcal{L}(-\pi,\w) = \mathcal{L}(\pi,\w) \mathcal{P}.
        \end{align*}
        We have shown that $\mathcal{L}(-\pi,\w)=\mathcal{L}(\pi,\w)$. This gives
        \begin{align*}
            \mathcal{P}\mathcal{L}(\pi,\w) = \mathcal{L}(\pi,\w) \mathcal{P} \ \ \ \text{ and } \ \ \ \mathcal{P}\mathcal{L}(-\pi,\w) = \mathcal{L}(-\pi,\w) \mathcal{P},
        \end{align*}
        which is the desired result.

    \end{itemize}
    This concludes the proof.
\end{proof}

In order to show the dependence of a solution $u$ on the spatial variable $x$ but also on both the quasiperiodicity $\k$ and the frequency $\w$, we will use the notation $u^{[\k]}(x,\w)$.

\begin{lemma}
    Let $\l_n(\w_n):=\w^2_n(\k)$ be an eigenvalue of $\mathcal{L}(\k,\w_n)$, with $u_n^{[\k]}(x,\w_n)$ being the associated eigenvector. Then:
    \begin{itemize}
        \item $\mathcal{P}u_n^{[\k]}(x,\w_n)$ is an eigenvector $\mathcal{L}(-\k,\w_n)$ with the same eigenvalue $\l_n(\k)$.
        \item $u_n^{[-\k]}(x,\w_n)$ is also an eigenvector of $\mathcal{L}(-\k,\w_n)$ with eigenvalue $\l_n(\k)$.
    \end{itemize}
\end{lemma}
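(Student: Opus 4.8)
The two claims are essentially symmetry statements about how the parity operator $\mathcal{P}$ and the quasiperiodic translation interact with the eigenvalue problem, so my plan is to verify each bullet by directly applying the relations already established, namely the intertwining identity \eqref{eq:paritysymmetry}, together with the defining eigenvalue equation $\mathcal{L}(\k,\w_n)u_n^{[\k]}=\l_n u_n^{[\k]}$. The whole proof should reduce to algebraic manipulation of operators acting on eigenvectors, with the only subtlety being to confirm that the transformed functions still live in the correct quasiperiodic space $L^2_{-\k}$.

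For the first bullet, I would start from the eigenvalue equation $\mathcal{L}(\k,\w_n)u_n^{[\k]}(x,\w_n)=\l_n(\k)\,u_n^{[\k]}(x,\w_n)$ and apply $\mathcal{P}$ on the left of both sides. Using \eqref{eq:paritysymmetry} in the form $\mathcal{P}\mathcal{L}(\k,\w_n)=\mathcal{L}(-\k,\w_n)\mathcal{P}$, the left-hand side becomes $\mathcal{L}(-\k,\w_n)\bigl(\mathcal{P}u_n^{[\k]}(x,\w_n)\bigr)$, while the right-hand side is simply $\l_n(\k)\,\mathcal{P}u_n^{[\k]}(x,\w_n)$, since $\l_n(\k)$ is a scalar that commutes with $\mathcal{P}$. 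This shows directly that $\mathcal{P}u_n^{[\k]}(x,\w_n)$ is an eigenvector of $\mathcal{L}(-\k,\w_n)$ with the same eigenvalue $\l_n(\k)$. I would remark that one must also check $\mathcal{P}u_n^{[\k]}$ satisfies the Floquet-Bloch condition \eqref{bd1} with parameter $-\k$: since $u_n^{[\k]}(x+1)=e^{i\k}u_n^{[\k]}(x)$, a short computation with $(\mathcal{P}f)(x)=f(1-x)$ gives $(\mathcal{P}u_n^{[\k]})(x+1)=e^{-i\k}(\mathcal{P}u_n^{[\k]})(x)$, so it indeed belongs to $L^2_{-\k}$.

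For the second bullet, the key observation is that the operator $\mathcal{L}(\k,\w)$ enters \eqref{def:L} only through the coefficients and the quasiperiodic boundary condition, and that the coefficients $\ve(x,\w)$ are real. Taking the complex conjugate of the eigenvalue equation for $u_n^{[\k]}$, and using that $\l_n(\k)=\w_n^2(\k)$ is real together with $\w_n(-\k)=\w_n(\k)$, shows that $\overline{u_n^{[\k]}}$ solves the eigenvalue problem at quasimomentum $-\k$; relabelling identifies this with $u_n^{[-\k]}(x,\w_n)$ up to normalisation. Equivalently one can invoke the evenness $\w_n(-\k)=\w_n(\k)$ to conclude that the eigenvalue at $-\k$ coincides with $\l_n(\k)$, so that $u_n^{[-\k]}(x,\w_n)$ is by definition an eigenvector of $\mathcal{L}(-\k,\w_n)$ with eigenvalue $\l_n(\k)$.

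The main obstacle, such as it is, is bookkeeping rather than genuine difficulty: I must be careful that the frequency argument $\w_n$ is held fixed throughout (so that the nonlinear dependence of $\mathcal{L}$ on $\w$ does not interfere) and that each transformed eigenvector is verified to satisfy the appropriate quasiperiodic boundary condition, so that it genuinely lies in the relevant eigenspace and the phrase ``eigenvector of $\mathcal{L}(-\k,\w_n)$'' is meaningful. Once the boundary-condition check is in place, both statements follow immediately from \eqref{eq:paritysymmetry} and the reality and evenness properties already recorded.
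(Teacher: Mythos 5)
Your proof is correct and follows essentially the same route as the paper: the first bullet is the identical intertwining argument (apply $\mathcal{P}$ to the eigenvalue equation and use \eqref{eq:paritysymmetry} with $\k$ replaced by $-\k$), and your closing argument for the second bullet --- invoking the evenness $\w_n(-\k)=\w_n(\k)$ so that the eigenvalue at $-\k$ coincides with $\l_n(\k)$ --- is exactly the paper's own reasoning. Your two additions, namely the explicit check that $\mathcal{P}u_n^{[\k]}$ satisfies the Floquet--Bloch condition with parameter $-\k$ and the alternative complex-conjugation argument, go beyond what the paper records but are both valid and, if anything, tighten the argument.
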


\begin{proof}
    We divide this proof in two parts, one for each result.
    \begin{itemize}
    
        \item We observe that \eqref{eq:paritysymmetry} is symmetric in term of $\k\in\mathcal{B}$, in the following sense: for all $\k\in\mathcal{B}$, it holds that $-\k\in\mathcal{B}$, since we have taken $\mathcal{B} = [-\pi,\pi]$. Thus, replacing $\k$ by $-\k$ in \eqref{eq:paritysymmetry}, we get
        \begin{align}\label{eq:paritysymmetry:converse}
            \mathcal{P}\mathcal{L}(\k,\w) = \mathcal{L}(-\k,\w) \mathcal{P}.
        \end{align}
        We also know that
        \begin{align*}
            \mathcal{L}(\k,\w_n) u_n^{[\k]}(x,\w_n) = \l_n(\w_n) u_n^{[\k]}(x,\w_n).
        \end{align*}
        Hence,
        \begin{align*}
            \mathcal{P}\mathcal{L}(\k,\w_n) u_n^{[\k]}(x,\w_n) = \mathcal{P} \l_n(\w_n) u_n^{[\k]}(x,\w_n) = \l_n(\w_n) \mathcal{P} u_n^{[\k]}(x,\w_n).
        \end{align*}
        So, from \eqref{eq:paritysymmetry:converse}, we get
        \begin{align*}
            \mathcal{L}(-\k,\w_n) \mathcal{P} u_n^{[\k]}(x,\w_n) = \l_n(\w_n) \mathcal{P} u_n^{[\k]}(x,\w_n),
        \end{align*}
        which gives the desired result.
        
        \item Let $\l_n(-\k,\w_n)$ be an eigenvalue of $\mathcal{L}(-\k,\w_n)$, with $u_n^{[-\k]}(x,\w_n)$ being the associated eigenvector, for $\w_n\in\mathbb{R}$ and $\k\in\mathcal{B}$. This implies that
        \begin{align*}
            \mathcal{L}(-\k,\w_n) u_n^{[-\k]}(x,\w_n) = \l_n(-\k,\w_n) u_n^{[-\k]}(x,\w_n).
        \end{align*}
        Although, we know that $\w_n(-\k)=\w_n(\k)$ and we have that $\l_n(\k,\w_n) = \w_n^2(\k)$. Hence, we get
        \begin{align*}
            \l_n(\k,\w_n) = \l_n(-\k,\w_n),
        \end{align*}
        which gives the desired result.
        
    \end{itemize}
    This concludes the proof.
\end{proof}

\begin{lemma}\label{lemma:mu}
    Let $\l_n(\k,\w_n)$ be a non-degenerate eigenvalue of $\mathcal{L}(\k,\w_n)$ with $u_n^{[\k]}(x,\w_n)$ being the associated eigenvector, for $\k$ at the band edges, i.e. $\k\in\{0,\pm\pi\}$. Then,
    \begin{align*}
        u_n^{[\k]}(x,\w_n) = \mathcal{P} u_n^{[\k]}(x,\w_n) \ \ \ \text{ or } \ \ \  u_n^{[\k]}(x,\w_n) = -\mathcal{P} u_n^{[\k]}(x,\w_n), \ \ \ \text{ for } \k\in\{0,\pm\pi\}.
    \end{align*}
\end{lemma}

\begin{proof}
    Since we take $\l_n(\k,\w_n)$ to be non-degenerate, there exists $\m\in\mathbb{R}$ such that
    \begin{align}\label{eq:eigproportional}
        u_n^{[-\k]}(x,\w_n) = \m \mathcal{P} u_n^{[\k]}(x,\w_n).
    \end{align}
    Let us take $\k$ to be at the band edges $\k\in\{0,\pm\pi\}$. We get for $\k=0$
    \begin{align*}
        u_n^{[0]}(x,\w_n) = \m \mathcal{P} u_n^{[0]}(x,\w_n).
    \end{align*}
    We know that $\mathcal{P}\mathcal{P}=Id$. Thus, applying $\mathcal{P}$ on both sides, we get
    \begin{align*}
        \mathcal{P} u_n^{[0]}(x,\w_n) = \m u_n^{[0]}(x,\w_n),
    \end{align*}
    which gives
    \begin{align*}
        u_n^{[0]}(x,\w_n) = \m^2 u_n^{[0]}(x,\w_n) \Rightarrow \m^2 = 1.
    \end{align*}
    Thus, either $\m=1$, in which case $\mathcal{P} u_n^{[0]}(x,\w_n) = u_n^{[0]}(x,\w_n)$, or $\m=-1$, in which case $\mathcal{P} u_n^{[0]}(x,\w_n) = -u_n^{[0]}(x,\w_n)$.\\
    Let us now take $\k=-\pi$. We have
    \begin{align*}
        u_n^{[\pi]}(x,\w_n) = \m \mathcal{P} u_n^{[-\pi]}(x,\w_n).
    \end{align*}
    Taking $\k = \pi$, we have
    \begin{align*}
        u_n^{[-\pi]}(x,\w_n) = \m \mathcal{P} u_n^{[\pi]}(x,\w_n).
    \end{align*}
    Combining these two expressions, we obtain 
    \begin{align*}
        u_n^{[\pi]}(x,\w_n) = \m^2 \mathcal{P} u_n^{[\pi]}(x,\w_n) \Rightarrow \m^2=1
    \end{align*}
    and
    \begin{align*}
        u_n^{[-\pi]}(x,\w_n) = \m^2 \mathcal{P} u_n^{[-\pi]}(x,\w_n) \Rightarrow \m^2=1.
    \end{align*}
    Thus, either $\m=1$, in which case $\mathcal{P} u_n^{[\pi]}(x,\w_n) = u_n^{[\pi]}(x,\w_n)$ and $\mathcal{P} u_n^{[-\pi]}(x,\w_n) = u_n^{[-\pi]}(x,\w_n)$, or $\m=-1$, in which case $\mathcal{P} u_n^{[\pi]}(x,\w_n) = -u_n^{[\pi]}(x,\w_n)$ and $\mathcal{P} u_n^{[-\pi]}(x,\w_n) = -u_n^{[-\pi]}(x,\w_n)$. This concludes the proof.
\end{proof}

We will call a function symmetric if $f = \mathcal{P}f$ and anti-symmetric if $f = -\mathcal{P}f$. Then, it is well known that due to the symmetry of the unit cell, combined with the periodicity or anti-periodicity that occurs when $\k\in\{0,\pi\}$, the modes at the edges of the bands must be either symmetric or anti-symmetric (see also \cite{coutant2023surface}).

\begin{lemma}\label{lemma:modesymmetry}
    Let $\k\in\{0,\pi\}$ and let $u_n'$ denote the spatial derivative of $u_n$. Then, either
    \begin{center}
        $u_n$ is symmetric and $u'_n$ is anti-symmetric,
    \end{center}
    or
    \begin{center}
        $u_n$ is anti-symmetric and $u'_n$ is symmetric.
    \end{center}
\end{lemma}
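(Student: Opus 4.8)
The plan is to build directly on Lemma~\ref{lemma:mu}, which already tells us that at the band edges $\k\in\{0,\pi\}$ a non-degenerate mode $u_n$ satisfies either $\mathcal{P}u_n = u_n$ (symmetric) or $\mathcal{P}u_n = -u_n$ (anti-symmetric). The whole content of the present lemma is therefore the relationship between the parity of $u_n$ and the parity of its spatial derivative $u_n'$, so the proof is essentially an exercise in differentiating the defining relation $u_n(x,\w_n) = \pm\,u_n(1-x,\w_n)$.

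First I would write out the two cases explicitly. Suppose $u_n$ is symmetric, so that $u_n(x) = (\mathcal{P}u_n)(x) = u_n(1-x)$ for all $x$. Differentiating both sides with respect to $x$ and applying the chain rule to the right-hand side gives
\begin{align*}
    u_n'(x) = -u_n'(1-x) = -(\mathcal{P}u_n')(x),
\end{align*}
which says precisely that $u_n'$ is anti-symmetric. Conversely, if $u_n$ is anti-symmetric, so $u_n(x) = -u_n(1-x)$, the same differentiation yields
\begin{align*}
    u_n'(x) = u_n'(1-x) = (\mathcal{P}u_n')(x),
\end{align*}
so $u_n'$ is symmetric. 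These two computations cover exactly the two alternatives produced by Lemma~\ref{lemma:mu}, and together they give the two displayed conclusions of the statement.

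The only subtlety worth flagging is the sign flip introduced by the parity operator $\mathcal{P}$: since $(\mathcal{P}f)(x) = f(1-x)$, the inner derivative $\tfrac{\upd}{\upd x}(1-x) = -1$ means that $\mathcal{P}$ does \emph{not} commute with differentiation but instead anticommutes with it, i.e. $(f\circ(1-\cdot))' = -(f')\circ(1-\cdot)$. This is what swaps symmetric into anti-symmetric and vice versa, and it is the reason the parity of $u_n'$ is always opposite to that of $u_n$. There is no genuine obstacle here — the result is a direct consequence of Lemma~\ref{lemma:mu} combined with the behaviour of $\mathcal{P}$ under differentiation — so the proof is short and the main thing to get right is simply the bookkeeping of that sign.
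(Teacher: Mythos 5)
Your proof is correct and follows essentially the same route as the paper's: invoke Lemma~\ref{lemma:mu} to obtain the dichotomy $\mathcal{P}u_n = \pm u_n$ at the band edges, then differentiate the relation $u_n(x) = \pm u_n(1-x)$ and use the chain-rule sign flip to deduce the opposite parity for $u_n'$. Your explicit remark that $\mathcal{P}$ anticommutes with differentiation is exactly the sign bookkeeping the paper performs, so there is nothing to add.
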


\begin{proof}
    Let us consider $\k$ at the band edges $0$ and $\pi$. Then, we know that either $u_n^{[\k]}(x,\w_n) = \mathcal{P}u_n^{[\k]}(x,\w_n)$ or $u_n^{[\k]}(x,\w_n) = -\mathcal{P}u_n^{[\k]}(x,\w_n)$. In the first case, we see that $u_n$ is symmetric. In addition,
    \begin{align*}
        &u_n^{[\k]}(x,\w_n) = \mathcal{P}u_n^{[\k]}(x,\w_n) = u_n^{[\k]}(1-x,\w_n) \Rightarrow \\
        &\Big(u_n^{[\k]}\Big)'(x,\w_n) = - \Big(u_n^{[\k]}\Big)'(1-x,\w_n) = - \mathcal{P}\Big(u_n^{[\k]}\Big)'(x,\w_n),
    \end{align*}
    which shows that $u'_n$ is anti-symmetric. Similarly, in the second case, we have that $u'_n$ is anti-symmetric. Then,
    \begin{align*}
        u_n^{[\k]}(x,\w_n) = -u_n^{[\k]}(1-x,\w_n) \Rightarrow \Big(u_n^{[\k]}\Big)'(x,\w_n) = \Big(u_n^{[\k]}\Big)'(x,\w_n) = \mathcal{P}\Big(u_n^{[\k]}\Big)'(x,\w_n),
    \end{align*}
    which shows that $u'_n$ is symmetric.
\end{proof}

Since the modes are labeled by $n$, from now on, in order to ease the notation, we will suppress the dependence on the frequency $\w_n$, since it appears indirectly at the subscripts, i.e. $u_n^{[\k]}(x,\w_n) \equiv u_n^{[\k]}(x)$. A consequence of the symmetries characterised in Lemma~\ref{lemma:modesymmetry} is that the modes at the edges of the spectral bands may have critical points at $x=0$, which can be characterised based on the modes' symmetries. These symmetry arguments are similar to those in \cite{coutant2023surface}. A specific proof for our setting is given in Appendix~\ref{app:symmetries}.

\begin{lemma}\label{lemma:u,u'}
    Let $\k\in\{0,\pi\}$. Then, either
        $$u_n^{[\pi]}(0) = 0 \quad\text{and}\quad\Big(u_n^{[0]}\Big)'(0) = 0,$$
    or
    \begin{center}
        $$u_n^{[0]}(0) = 0\quad\text{and}\quad\Big(u_n^{[\pi]}\Big)'(0) = 0.$$
    \end{center}
\end{lemma}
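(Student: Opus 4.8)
The plan is to read off the behaviour of each band-edge mode at the cell boundary $x=0$ directly from its parity about the cell centre $x=\tfrac12$, which is exactly the information supplied by Lemma~\ref{lemma:modesymmetry}. Fix a band index $n$ and a band edge $\k\in\{0,\pi\}$. By Lemma~\ref{lemma:modesymmetry} the mode $u_n^{[\k]}$ is either symmetric with $\big(u_n^{[\k]}\big)'$ anti-symmetric, or anti-symmetric with $\big(u_n^{[\k]}\big)'$ symmetric, where symmetry always means $\mathcal{P}f=f$, i.e. $f(x)=f(1-x)$. Evaluating these parity relations at $x=0$ gives $u_n^{[\k]}(0)=\pm u_n^{[\k]}(1)$ together with $\big(u_n^{[\k]}\big)'(0)=\mp\big(u_n^{[\k]}\big)'(1)$, and the idea is to feed these into the Floquet--Bloch conditions \eqref{bd1} read at the band edges.

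First I would treat $\k=0$, where \eqref{bd1} is the periodicity $u_n^{[0]}(0)=u_n^{[0]}(1)$ and $\big(u_n^{[0]}\big)'(0)=\big(u_n^{[0]}\big)'(1)$. If $u_n^{[0]}$ is symmetric then $\big(u_n^{[0]}\big)'(0)=-\big(u_n^{[0]}\big)'(1)$, which combined with periodicity forces $\big(u_n^{[0]}\big)'(0)=0$; if instead $u_n^{[0]}$ is anti-symmetric then $u_n^{[0]}(0)=-u_n^{[0]}(1)$, forcing $u_n^{[0]}(0)=0$. Next I would treat $\k=\pi$, where \eqref{bd1} is the anti-periodicity $u_n^{[\pi]}(0)=-u_n^{[\pi]}(1)$ and $\big(u_n^{[\pi]}\big)'(0)=-\big(u_n^{[\pi]}\big)'(1)$. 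The same substitution shows that a symmetric $u_n^{[\pi]}$ forces $u_n^{[\pi]}(0)=0$, while an anti-symmetric $u_n^{[\pi]}$ forces $\big(u_n^{[\pi]}\big)'(0)=0$. Thus at every band edge exactly one of $u_n(0)=0$ or $u_n'(0)=0$ holds, dictated entirely by the parity of the mode. (An equivalent and slightly slicker route is to observe that the periodicity together with the mirror symmetry about $x=\tfrac12$ makes the structure symmetric about $x=0$ as well, so that at $\k\in\{0,\pi\}$ the mode is also an eigenfunction of reflection through the interface; even modes then satisfy $u_n'(0)=0$ and odd modes $u_n(0)=0$ at once.)

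The two branches in the statement correspond precisely to the two matched pairings of these parities across the band: the first branch is the situation in which $u_n^{[0]}$ and $u_n^{[\pi]}$ are both symmetric, and the second the situation in which they are both anti-symmetric. The step I expect to be the main obstacle is therefore not the boundary computation above, but excluding the two mismatched configurations, in which the two edges would give $u_n(0)=0$ at both, or $u_n'(0)=0$ at both. Establishing this correlation between the parities at the two ends of a single band is the delicate point: it is where the non-degeneracy hypothesis of Lemma~\ref{lemma:mu} is genuinely used, and it requires the finer symmetry bookkeeping of how the parity labels are assigned to the band edges through the interlacing band--gap structure. I would carry this out by tracking the Bloch mode $u_n^{[\k]}$ continuously across $\k\in[0,\pi]$ and pinning down the two edge parities consistently; this is exactly the content deferred to Appendix~\ref{app:symmetries}, in the spirit of the symmetry analysis in \cite{coutant2023surface}, while everything else reduces to the elementary substitutions above.
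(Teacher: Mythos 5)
Your four boundary substitutions are correct, and they are in fact the entirety of the paper's own argument: the proof in Appendix~\ref{app:symmetries} performs exactly these combinations of the parities from Lemma~\ref{lemma:modesymmetry} with the Floquet conditions \eqref{bd1}. However, the step you defer --- excluding the mismatched configurations in which $u_n^{[0]}$ and $u_n^{[\pi]}$ have opposite parity types --- is \emph{not} ``the content deferred to Appendix~\ref{app:symmetries}''. The paper never addresses it: its proof reads Lemma~\ref{lemma:modesymmetry} as a single dichotomy attached to the band $n$ and applies the same branch simultaneously at $\kappa=0$ and $\kappa=\pi$, even though that lemma is a statement for each fixed $\kappa$ separately. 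So at precisely the point where you stop, the paper stops too; there is no finer bookkeeping anywhere in the text for you to appeal to.

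Moreover, this missing correlation cannot be established in general, so no completion of your plan (tracking $u_n^{[\kappa]}$ continuously in $\kappa$, invoking non-degeneracy) can work: non-degeneracy via Lemma~\ref{lemma:mu} fixes a definite parity at each edge separately but links nothing across edges, and parity eigenvalues only exist at $\kappa\in\{0,\pi\}$, so continuity in $\kappa$ carries no parity information between them. The mismatched bands genuinely occur --- they are exactly the bands that make the Zak phase and bulk index nontrivial. One can see this inside the paper's own setting: the materials $A$ and $B$ of Figure~\ref{fig:materials_AandB} generate the \emph{same} infinite crystal, with the two mirror-symmetric unit cells (hence the two mirror centres) offset by half a period. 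For an anti-periodic edge mode ($\kappa=\pi$), translation by one period multiplies the mode by $-1$, so its parity about one centre is the negative of its parity about the other, while for a periodic edge mode ($\kappa=0$) the two parities coincide. Hence a band whose edge modes have matched parities relative to cell $A$ automatically has mismatched parities relative to cell $B$: for that band, in material $B$ one finds either $u_n^{[0]}(0)=0$ together with $u_n^{[\pi]}(0)=0$, or $\bigl(u_n^{[0]}\bigr)'(0)=0$ together with $\bigl(u_n^{[\pi]}\bigr)'(0)=0$, and both alternatives of the lemma fail. What is actually provable --- and what your first two paragraphs do prove, matching the paper --- is the per-edge dichotomy: at each band edge exactly one of $u_n^{[\kappa]}(0)=0$ or $\bigl(u_n^{[\kappa]}\bigr)'(0)=0$ holds, according to the parity of that edge mode. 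The cross-edge pairing asserted in the statement needs additional hypotheses; your instinct that this is the delicate point is exactly right, but neither your proposal nor the paper closes it.
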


\subsection{Bulk index}

Let us now define the notion of the bulk index. This is a topological property of the material which depends on the symmetry of the mode at the edges of a band.

\begin{definition}[Bulk index]
    Let $\mathfrak{S}:=[a,b]$ denote the $n$-th band gap. Then we define the bulk topological index $\mathcal{J}_n$ of the $n$-th band gap as follows:
    \begin{align}\label{def:bulk}
        \mathcal{J}_n :=
        \begin{cases}
            +1, \ \ \ \text{ if } u_n \text{ is symmetric at $a$},\\
            -1, \ \ \ \text{ if } u_n \text{ is anti-symmetric at $a$}.
        \end{cases}
    \end{align}
\end{definition}

\subsection{Frequency existence}

Using the symmetry properties that we have proved, we will show that there exists $\w\in\mathbb{R}$, such that \eqref{eq:impedancecondition} is satisfied.

\subsubsection{Impedance evaluation}

Let us now see what the impedance functions look like at the band edges $\k\in\{0,\pi\}$. We introduce the following notation:
\begin{align*}
    Z^{\pm}_{\k} := Z^{\pm} \text{ at } \k,
\end{align*}
for $\k\in\mathfrak{B}$. One of the main insights from the work of \cite{coutant2023surface} is the dependence of the surface impedance on the symmetries of the eigenmodes (and, by definition, on the bulk index). This can be replicated in our setting with the following lemma.

\begin{lemma}\label{lemma:Z:eval}
    It holds that either
    \begin{center}
        $Z^{\pm}_{0} = \pm\infty$ and $Z^{\pm}_{\pi} = 0$,
    \end{center}
    or 
    \begin{center}
        $Z^{\pm}_{0} = 0$ and $Z^{\pm}_{\pi} = \pm\infty$.
    \end{center}
\end{lemma}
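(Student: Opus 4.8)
The plan is to read the boundary values of $Z^\pm$ directly off the vanishing pattern of $u_n$ and $u'_n$ at the band edges supplied by Lemma~\ref{lemma:u,u'}. Writing the impedances schematically as $Z^\pm_\kappa = \pm\, u_n^{[\kappa]}(0)\big/\big(\tfrac{1}{\ve_1(\w)}(u_n^{[\kappa]})'(0)\big)$, the value is controlled entirely by which of the numerator $u_n^{[\kappa]}(0)$ or the denominator $(u_n^{[\kappa]})'(0)$ vanishes, since $\ve_1(\w)$ is finite and nonzero at the frequencies in question and so the factor $1/\ve_1$ is harmless. Before splitting into cases I would record the elementary but essential fact that $u_n^{[\kappa]}(0)$ and $(u_n^{[\kappa]})'(0)$ cannot vanish simultaneously: near $x_0=0$ the permittivity is constant, so $u_n$ solves a second-order linear ODE with constant coefficients there, and $(u_n(0),u'_n(0))=(0,0)$ would force $u_n\equiv0$ by uniqueness, contradicting that it is a nontrivial eigenmode. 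Hence whenever one quantity vanishes the other is nonzero, so each impedance is unambiguously either $0$ or $\pm\infty$.

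Next I would invoke the dichotomy of Lemma~\ref{lemma:u,u'} and treat the two alternatives in parallel. In the first case $(u_n^{[0]})'(0)=0$ while $u_n^{[0]}(0)\neq0$, so the denominator of $Z^\pm_0$ vanishes against a nonzero numerator and $Z^\pm_0=\pm\infty$; at the same time $u_n^{[\pi]}(0)=0$ with $(u_n^{[\pi]})'(0)\neq0$, so the numerator of $Z^\pm_\pi$ vanishes and $Z^\pm_\pi=0$. The second case is obtained by exchanging the roles of $\kappa=0$ and $\kappa=\pi$, which yields $Z^\pm_0=0$ and $Z^\pm_\pi=\pm\infty$. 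This reproduces exactly the two alternatives in the statement, so modulo the sign of the divergence the lemma is a direct substitution.

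The step I expect to be the main obstacle is pinning down the \emph{sign} of the blow-up, i.e.\ justifying that the superscript $\pm$ on $Z$ is matched by the sign of the infinity rather than merely asserting $|Z|\to\infty$. This is where the opposite sign conventions in the definitions of $Z^-$ and $Z^+$ must be combined with information about the sign of the surviving quantity ($u_n(0)$, respectively $(u_n)'(0)$) and the direction from which the band edge is reached from inside the gap. I would handle this by approaching the edge frequency through the band gap, where on each side the relevant mode is the genuinely decaying evanescent Bloch mode and $Z^\pm$ is real-valued, and then tracking the sign of the nonvanishing numerator or denominator as the vanishing quantity passes through zero; the compatibility of these signs with the $\mp$ prefactors in the definitions of $Z^\mp$ then fixes the matched $\pm\infty$. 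Once this sign bookkeeping is settled, everything else follows from the substitutions above.
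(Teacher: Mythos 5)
Your proposal follows essentially the same route as the paper: uniqueness for the second-order ODE rules out simultaneous vanishing of $u_n^{[\k]}(0)$ and $\big(u_n^{[\k]}\big)'(0)$, and the dichotomy of Lemma~\ref{lemma:u,u'} is then substituted into the definitions of $Z^{\pm}$ to read off $0$ versus $\pm\infty$ at each band edge. The sign bookkeeping you flag as the main obstacle is not actually carried out in the paper's proof either: it leaves the divergent limits as an ambiguous $\pm\infty$, and the signs are only pinned down later, much as you suggest, by combining the blow-up with the monotonicity of Theorem~\ref{thm:Z:decrease} inside the gap when Theorem~\ref{thm:bulksum} is proved.
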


\begin{proof}
    As mentioned above, from the uniqueness of solution to \eqref{eq:Helmholtz}, we know that, for $\k\in\{0,\pi\}$, we cannot have $u_n^{[\k]}(0) = \Big( u_n^{[\k]} \Big)'(0)=0$. Also, from Lemma \ref{lemma:u,u'}, we have two cases for the values of $u_n$ and $u_n'$ at the band edges. We shall treat them separately.
    \begin{itemize}
    
        \item In the first case, we know that
        \begin{align*}
            u_n^{[\pi]}(0) = 0 \ \ \ \text{and} \ \ \ \Big(u_n^{[0]}\Big)'(0) = 0.
        \end{align*}
        This implies that
        \begin{align*}
            \Big(u_n^{[\pi]}\Big)'(0) \ne 0 \ \ \ \text{and} \ \ \ u_n^{[0]}(0) \ne 0.
        \end{align*}
        Hence, combining these results at $\k=0$, we get
        \begin{align*}
            \lim_{x\uparrow0} Z^-_0(\w) = \lim_{x\uparrow0} -\frac{u_n^{[0]}(x,\w)}{\frac{1}{\ve_1(\w)}\Big(u_n^{[0]}\Big)'(x,\w)} = \pm\infty \ \ \text{ and } \ \ \lim_{x\downarrow0} Z^+_0(\w) = \lim_{x\downarrow0} -\frac{u_n^{[0]}(x,\w)}{\frac{1}{\ve_1(\w)}\Big(u_n^{[0]}\Big)'(x,\w)} = \pm\infty.
        \end{align*}
        At $\k=\pi$, we see that
        \begin{align*}
            \lim_{x\uparrow0} Z^-_{\pi}(\w) = \lim_{x\uparrow0} -\frac{u_n^{[\pi]}(x,\w)}{\frac{1}{\ve_1(\w)}\Big(u_n^{[\pi]}\Big)'(x,\w)} = 0 \ \ \text{ and } \ \ \lim_{x\uparrow0} Z^+_{\pi}(\w) = \lim_{x\uparrow0} -\frac{u_n^{[\pi]}(x,\w)}{\frac{1}{\ve_1(\w)}\Big(u_n^{[\pi]}\Big)'(x,\w)} = 0.
        \end{align*}

        \item For the second case, we know that
        \begin{align*}
            u_n^{[0]}(0) = 0 \ \ \ \text{and} \ \ \  \Big(u_n^{[\pi]}\Big)'(0) = 0,
        \end{align*}
        which implies that
        \begin{align*}
            \Big(u_n^{[0]}\Big)'(0) \ne 0 \ \ \ \text{and} \ \ \  u_n^{[\pi]}(0) \ne 0.
        \end{align*}
        In the exact same reasoning as in the previous case, we get at $\k=0$,
        \begin{align*}
            \lim_{x\uparrow0} Z^-_0(\w) = \lim_{x\uparrow0} -\frac{u_n^{[0]}(x,\w)}{\frac{1}{\ve_0}\Big(u_n^{[0]}\Big)'(x,\w)} = 0 \ \ \text{ and } \ \ \lim_{x\downarrow0} Z^+_0(\w) = \lim_{x\downarrow0} -\frac{u_n^{[0]}(x,\w)}{\frac{1}{\ve_0}\Big(u_n^{[0]}\Big)'(x,\w)} = 0
        \end{align*}
        and at $\k=\pi$,
        \begin{align*}
            \lim_{x\uparrow0} Z^-_{\pi}(\w) = \lim_{x\uparrow0} -\frac{u_n^{[\pi]}(x,\w)}{\frac{1}{\ve_0}\Big(u_n^{[\pi]}\Big)'(x,\w)} = \pm\infty \ \ \text{ and } \ \ \lim_{x\uparrow0} Z^+_{\pi}(\w) = \lim_{x\uparrow0} -\frac{u_n^{[\pi]}(x,\w)}{\frac{1}{\ve_0}\Big(u_n^{[\pi]}\Big)'(x,\w)} = \pm\infty.
        \end{align*}
        
    \end{itemize}
    This concludes the proof.
\end{proof}

\subsubsection{Impedance property}

In order to ease the notation, let us define:
\begin{align}\label{def:E,phi}
    E(x,\w) := \frac{1}{\ve(x,\w)} \ \ \ \text{ and } \ \ \ \phi := \frac{\upd u}{\upd \w}.
\end{align}

The following result is due to the work of \cite{coutant2023surface}, which says that the surface impedance functions are strictly decreasing functions of $\omega$, within each band gap. For dispersive systems, this rests on the aforementioned assumption that $\epsilon$ is a non-decreasing function of $\omega$. A proof of this result, specific to our setting, is given in Appendix~\ref{app:monotonicity}.

\begin{theorem}\label{thm:Z:decrease}
    Let us assume that $\w\in\mathfrak{A}$, where $\mathfrak{A}\subset\mathbb{R}$ is a band gap. Then, the surface impedance decreases with respect to the frequency, i.e.
    \begin{align*}
        \frac{\upd Z^+}{\upd \w}<0, \ \ \frac{\upd Z^-}{\upd \w}<0, \ \ \ \text{ for } \w\in\mathfrak{A}.
    \end{align*}
\end{theorem}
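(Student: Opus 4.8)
The plan is to compute $\upd Z^\pm/\upd\w$ directly and reduce its sign to that of a bilinear concomitant (a Wronskian-type quantity) that is controlled by the standing assumption $\upd\ve/\upd\w\geq 0$. Throughout, write $p := E\,\partial_x u$ for the flux, so that the impedances read $Z^+ = u(0^+)/p(0^+)$ and $Z^- = -u(0^-)/p(0^-)$; I denote by a prime the $x$-derivative and by a dot the $\w$-derivative, and recall $\phi = \dot u$. Since we work inside a band gap, the relevant eigenmode $u$ is the one that decays exponentially as $x\to+\infty$ (for $Z^+$) or as $x\to-\infty$ (for $Z^-$). A short computation gives $\upd Z^+/\upd\w = Q(0^+)/p(0^+)^2$ and $\upd Z^-/\upd\w = -Q(0^-)/p(0^-)^2$, where $Q := \phi\, p - u\,\dot p$, so it suffices to determine the sign of $Q$ at the interface.

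First I would differentiate the eigenvalue equation, written as $(E u')' = -\mu_0\w^2 u$, with respect to $\w$. This yields an equation for $\phi$ and lets me express $\partial_x\dot p$ in terms of $u$ and $\phi$. Feeding these into the $x$-derivative of $Q$, the terms proportional to $\mu_0\w^2 u\phi$ cancel and the remaining transport term collapses, leaving the clean identity
\begin{align*}
    Q' = -\dot E\,(u')^2 + 2\mu_0\,\w\, u^2.
\end{align*}
Because $\dot E = \partial_\w(1/\ve) = -\ve^{-2}\,\partial_\w\ve \leq 0$ by the monotonicity hypothesis on $\ve$, and $\w>0$, the right-hand side is nonnegative, and in fact strictly positive for a nontrivial mode thanks to the $u^2$ term.

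To conclude, I would integrate $Q'$ over each semi-infinite region and use the exponential decay of the mode in the band gap to kill the boundary contribution at infinity, i.e. $Q(\pm\infty)=0$. For $Z^+$ this gives $Q(0^+) = -\int_0^{+\infty} Q'\,\upd x < 0$, hence $\upd Z^+/\upd\w < 0$; for $Z^-$ it gives $Q(0^-) = \int_{-\infty}^0 Q'\,\upd x > 0$, and the extra minus sign in the definition of $Z^-$ again yields $\upd Z^-/\upd\w < 0$.

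The main obstacle is not the algebra of the identity but the analytic justification of its ingredients: I must verify that $u$ depends differentiably on $\w$ throughout the gap, that the $x$- and $\w$-derivatives may be interchanged, and above all that both $u$ and $\phi=\dot u$ (together with their fluxes) decay fast enough at infinity for the boundary terms to vanish and the improper integrals to converge. This is precisely where the band-gap hypothesis does the real work: there the transfer (monodromy) matrix has no unimodular eigenvalue, so the admissible Bloch solution decays exponentially and, by smooth dependence of the Floquet multipliers on $\w$, so does its $\w$-derivative. Establishing this decay of $\phi$ uniformly enough to differentiate under the integral sign is the step I expect to require the most care.
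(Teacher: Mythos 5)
Your proposal is correct and follows essentially the same route as the paper's own proof in Appendix~\ref{app:monotonicity}: your quantity $Q=\phi\,p-u\,\dot p$ is exactly the paper's Wronskian $\mathcal{W}=Eu'\f - u\big(\tfrac{\upd E}{\upd\w}u'+E\f'\big)$, and the identity $Q'=2\m_0\w u^2-(u')^2\tfrac{\upd E}{\upd\w}$, the integration over each half-line with vanishing boundary term at infinity, and the use of $\tfrac{\upd\ve}{\upd\w}\geq0$ all match the paper step for step. The analytic caveats you flag (differentiability in $\w$, interchange of derivatives, decay of $\f$) are indeed glossed over in the paper as well, so you are, if anything, slightly more careful on that point.
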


\subsubsection{Interface mode existence}

Let $\mathcal{L}_A$ and $\mathcal{L}_B$ denote the differential operator $\mathcal{L}$ for the material $A$ and the material $B$, respectively. We define $\mathfrak{A}_n^{A}$ and $\mathfrak{A}_n^{B}$ to be the $n$-th band gaps of $\mathcal{L}_A$ and $\mathcal{L}_B$, respectively. Let them be given by
\begin{align*}
    \mathfrak{A}_n^{A} := [\w_{n,A}^+,\w_{n+1,A}^-] \ \ \ \text{ and } \ \ \ \mathfrak{A}_n^{B} := [\w_{n,B}^+,\w_{n+1,B}^-].
\end{align*}
Let us denote by $\mathfrak{A}_n$ the intersection of $\mathfrak{A}_n^{A}$ and $\mathfrak{A}_n^{B}$. We assume that it is non-empty, i.e.
\begin{align*}
    \mathfrak{A}_n = \mathfrak{A}_n^{A} \cap \mathfrak{A}_n^{B} \ne \emptyset.
\end{align*}
We denote it by
\begin{align*}
    \mathfrak{A}_n := [\w_{n}^+,\w_{n+1}^-].
\end{align*}
Finally, let us denote by $\mathcal{J}_{n,A}$ and $\mathcal{J}_{n,B}$ the bulk topological indices associated to the material $A$ and the material $B$, respectively, in $\mathfrak{A}_n$.

\begin{theorem}\label{thm:bulksum}
    If
    \begin{align*}
        \mathcal{J}_{n,A} + \mathcal{J}_{n,B} \ne 0,
    \end{align*}
    then no interface mode exists. If 
    \begin{align*}
        \mathcal{J}_{n,A} + \mathcal{J}_{n,B} = 0,
    \end{align*}
    then there exists a unique frequency $\w_{m}\in\mathfrak{A}_n$, for which an interface mode exists.
\end{theorem}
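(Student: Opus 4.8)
The plan is to convert the existence problem into a sign-change problem for a single monotone function and to read off the answer from the band-edge data. By Lemma~\ref{lemma:impedance} an interface mode in $\mathfrak{A}_n$ exists exactly when $g(\w):=Z^+(\w)+Z^-(\w)$ vanishes somewhere in $\mathfrak{A}_n$, i.e. when \eqref{eq:impedancecondition} holds. By Theorem~\ref{thm:Z:decrease} both $Z^+$ and $Z^-$ are strictly decreasing on the open gap, so $g$ is continuous and strictly decreasing there. This already disposes of uniqueness: $g$ can vanish at most once, so a mode, if it exists, is unique. It also reduces existence to a purely boundary computation, since a continuous strictly decreasing function on $(\w_n^+,\w_{n+1}^-)$ has a (then unique) zero if and only if $\lim_{\w\downarrow\w_n^+}g>0$ and $\lim_{\w\uparrow\w_{n+1}^-}g<0$, where these one-sided limits are allowed to take the values $\pm\infty$.

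Next I would compute those limits. By Lemma~\ref{lemma:Z:eval} (with Lemma~\ref{lemma:u,u'}), at each band edge every surface impedance equals $0$ or $\pm\infty$, and which of these occurs is governed by the parity---symmetric or anti-symmetric---of the corresponding band-edge Bloch mode, which is precisely what the bulk indices record through \eqref{def:bulk}. Strict monotonicity of each $Z^\pm$ then fixes the \emph{sign} of the infinities: because $Z^\pm$ attains its supremum at the lower edge and its infimum at the upper edge, a blow-up at $\w_n^+$ must be $+\infty$ and a blow-up at $\w_{n+1}^-$ must be $-\infty$. I would therefore tabulate $\lim_{\w\downarrow\w_n^+}Z^\pm\in\{0,+\infty\}$ and $\lim_{\w\uparrow\w_{n+1}^-}Z^\pm\in\{0,-\infty\}$ as functions of $(\mathcal{J}_{n,A},\mathcal{J}_{n,B})$ and of the symmetry type $\k\in\{0,\pi\}$ common to the $n$-th gaps of $A$ and $B$.

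With this table the dichotomy falls out. When $\mathcal{J}_{n,A}+\mathcal{J}_{n,B}=0$ the indices are opposite, and one checks that the material carrying the anti-symmetric index has its impedance pinned to $0$ at the lower edge but (by the parity combination forced by monotonicity) blowing up to $-\infty$ at the upper edge, while the material carrying the symmetric index contributes $+\infty$ at the lower edge; hence $g\to+\infty$ as $\w\downarrow\w_n^+$ and $g\to-\infty$ as $\w\uparrow\w_{n+1}^-$, so $g$ has exactly one zero and a unique interface mode exists. When $\mathcal{J}_{n,A}+\mathcal{J}_{n,B}\neq 0$ the indices agree and the two contributions align so that one of the one-sided limits of $g$ equals $0$ while $g$ is monotone away from it; then $g$ keeps a constant sign on the whole gap and never vanishes, so no interface mode exists.

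The step I expect to be the main obstacle is the sign bookkeeping behind this table and, above all, its robustness. Three points need care: the opposite sign conventions built into $Z^+$ and $Z^-$; the fact that the correspondence between a mode's parity and the value $\{0,\pm\infty\}$ of its impedance flips between $\k=0$ and $\k=\pi$, so one must genuinely use that the $n$-th gaps of $A$ and $B$ are of the same type; and, most seriously, that $\mathfrak{A}_n^A$ and $\mathfrak{A}_n^B$ need not coincide, so at an endpoint of the overlap $\mathfrak{A}_n$ only one impedance sits at a band-edge value while the other is finite. To handle this I would exploit that each $Z^\pm$ is monotone---hence of controlled sign---on the whole of its own gap $\mathfrak{A}_n^A,\mathfrak{A}_n^B\supseteq\mathfrak{A}_n$, and rule out an interior zero of an individual impedance; certifying that the lower-edge parity controls the sign of the impedance \emph{throughout} the gap is exactly the point where the topological (Zak phase) structure of Section~\ref{section:ZP} is needed.
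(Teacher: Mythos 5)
Your proposal is correct and follows the paper's proof essentially step for step: the reduction via Lemma~\ref{lemma:impedance} to a zero of $Z^+ + Z^-$, strict monotonicity from Theorem~\ref{thm:Z:decrease} giving uniqueness and the intermediate-value criterion, band-edge values from Lemma~\ref{lemma:Z:eval} with the signs of the blow-ups fixed by monotonicity, and the same endpoint case analysis over $\min\mathfrak{A}_n \in \{\w_{n,A}^+,\w_{n,B}^+\}$ and $\max\mathfrak{A}_n \in \{\w_{n+1,A}^-,\w_{n+1,B}^-\}$ to handle non-coinciding gaps. One small correction: the Zak phase of Section~\ref{section:ZP} is not needed anywhere in this argument --- the monotonicity of each $Z^\pm$ on its own full gap $\mathfrak{A}_n^{A}$ or $\mathfrak{A}_n^{B}$, which you already invoke, is precisely how the paper controls the sign of each impedance throughout the overlap.
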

\begin{proof}
    We will divide the proof in two steps, one for each case.
    \begin{itemize}
        
        \item Let us consider:
        \begin{align*}
            \mathcal{J}_{n,A} + \mathcal{J}_{n,B} \ne 0.
        \end{align*}
        We will treat the case $\k=0$. The same argument holds for $\k=\pi$. From \eqref{def:bulk}, this implies that either $u^+_{n,A}, u^+_{n,B}$ are both symmetric or both anti-symmetric. Let us treat the case of both being symmetric. The anti-symmetric case follows the same reasoning. From Theorem \ref{thm:Z:decrease}, we have that $Z^-$ is decreasing in $\mathfrak{A}_n$. Then, from Lemma \ref{lemma:Z:eval}, we have
        \begin{align*}
            Z^-_0(\w_{n,A}^+) = +\infty \searrow Z^-_0(\w_{n+1,A}^-) = 0 \ \text{ in } \mathfrak{A}_{n}^A.
        \end{align*}
        Similarly, we get
        \begin{align*}
            Z^+_0(\w_{n,B}^+) = +\infty \searrow Z^+_0(\w_{n+1,B}^-) = 0 \ \text{ in } \mathfrak{A}_{n}^B.
        \end{align*}
        We see directly that Lemma \ref{lemma:impedance} does not hold, and hence, no interface mode exists.

        \item Let us consider: 
        \begin{align*}
            \mathcal{J}_{n,A} + \mathcal{J}_{n,B} = 0.
        \end{align*}
        We will treat the case $\k=0$. The same argument holds for $\k=\pi$. From \eqref{def:bulk}, this implies that either $u^+_{n,A}$ is symmetric and $u_{n,B}^+$ is anti-symmetric, or $u^+_{n,A}$ is anti-symmetric and $u_{n,B}^+$ is symmetric. Let us focus on the first possibility. For the second one, the same argument will hold. So, we take
        \begin{center}
            $u^+_{n,A}$ symmetric and $u_{n,B}^+$ anti-symmetric at $\k=0$.
        \end{center}
        Then, from Lemma \ref{lemma:Z:eval} and Theorem \ref{thm:Z:decrease}, we see that 
        \begin{align*}
            Z^-_0(\w_{n,A}^+) = +\infty \searrow Z^-_0(\w_{n+1,A}^-) = 0 \  \text{ in } \mathfrak{A}_{n}^A.
        \end{align*}
        Using the fact that $u_{n,B}^+$ is anti-symmetric, we get, from Lemma \ref{lemma:Z:eval} and Theorem \ref{thm:Z:decrease}, that
        \begin{align*}
            Z^+_0(\w_{n,B}^+) = 0 \searrow Z^+_0(\w_{n+1,B}^-) = -\infty \ \text{ in } \mathfrak{A}_{n}^B.
        \end{align*}
        Let us now study how $Z_0:=Z^-_0+Z^+_0$ behaves in $\mathfrak{A}_n$. Since $\mathfrak{A}_n\ne\emptyset$, we observe that
        \begin{align*}
            \min{\mathfrak{A}_n} \in \Big\{ \w_{n,A}^+, \w_{n,B}^+\Big\}.
        \end{align*}
        If $\min{\mathfrak{A}_n} = \w_{n,A}^+$, then we see that 
        \begin{align*}
            Z_0\Big(\w_{n,A}^+\Big) = +\infty,
        \end{align*}
        since $Z^-_0\Big(\w_{n,A}^+\Big)=+\infty$ and $Z^+_0\Big(\w_{n,A}^+\Big)\in\mathbb{R}_{<0}$. Also, if $\min{\mathfrak{A}_n} = \w_{n,B}^+$, then we see that 
        \begin{align*}
            Z_0\Big(\w_{n,B}^+\Big) >0,
        \end{align*}
        since $Z^-_0\Big(\w_{n,B}^+\Big)\in\mathbb{R}_{>0}$ and $Z^+_0\Big(\w_{n,B}^+\Big)=0$. Thus, in any case, we get that
        \begin{align*}
            Z_0\Big(\min{\mathfrak{A}_n}\Big) >0.
        \end{align*}
        Similarly, we have that 
        \begin{align*}
            \max{\mathfrak{A}_n} \in \Big\{ \w_{n+1,A}^-, \w_{n+1,B}^-\Big\}.
        \end{align*}
        Then, if $\max{\mathfrak{A}_n} = \w_{n+1,A}^-$, we see that
        \begin{align*}
            Z_0\Big(\w_{n+1,A}^-\Big) < 0,
        \end{align*}
        since $Z^-_0\Big(\w_{n+1,A}^-\Big)=0$ and $Z^+_0\Big(\w_{n+1,A}^-\Big)\in\mathbb{R}_{<0}$. Also, if $\max{\mathfrak{A}_n} = \w_{n+1,B}^-$, we have that
        \begin{align*}
            Z_0\Big(\w_{n+1,A}^-\Big) = - \infty,
        \end{align*}
        since $Z^-_0\Big(\w_{n+1,A}^-\Big)\in\mathbb{R}_{>0}$ and $Z^+_0\Big(\w_{n+1,A}^-\Big)=-\infty$. Hence, in any case, we observe that
        \begin{align*}
             Z_0\Big(\max{\mathfrak{A}_n}\Big) < 0.
        \end{align*}
        To sum up these results, we have that
        \begin{align*}
            Z_0\Big(\min{\mathfrak{A}_n}\Big) >0 \ \ \ \text{ and } \ \ \ Z_0\Big(\max{\mathfrak{A}_n}\Big) < 0.
        \end{align*}
        Then, from Theorem \ref{thm:Z:decrease}, we deduce that
        \begin{align}
            \frac{\upd Z_0}{\upd \w}<0 \ \text{ in } \mathfrak{A}_n.
        \end{align}
        So, since $Z_0$ is continuous and strictly decreasing in $\mathfrak{A}_n$, there exists a unique point $\w_m\in\mathfrak{A}_n$ at which $Z_0(\w_m)=0$. Therefore, a unique interface mode exists. This concludes the proof.
        
    \end{itemize}
\end{proof}

\begin{remark}
    Here, let us recall that we have considered piecewise constant permittivities with respect to $x$ but still dispersive with respect to $\w$. Although, as mentioned in Section \ref{section:MS}, the analysis still applies on permittivity functions which respect the mirror symmetry in each periodic cell and the results remain the same. 
\end{remark}

\section{Zak phase}\label{section:ZP}

Let us now study the Zak phase, an invariant associated to one-dimensional crystals. This describes the topological properties of our system.

As mentioned at the beginning, we work in the space $L^2_{\k}$. We equip $L^2_{\k}$, with the following inner product:
\begin{align}\label{def:innerproduct}
    \langle f,g \rangle_ := \int_{0}^1 \m_0 f(x) \overline{g}(x) \upd x, \ \ \ f,g\in L^2_{\k}.
\end{align}

\begin{definition}[Berry connection]
    We define the Berry connection $A_n(\k)$ of the $n$-th band to be
    \begin{align}\label{def:Berry}
        A_n(\k) := -i \left\langle \frac{\partial u^{[\k]}_n}{\partial\k}, u_n^{[\k]} \right\rangle.
    \end{align}
\end{definition}

\begin{definition}[Zak phase]
    We define the Zak phase $\Theta_n$ of the $n$-th gap to be the integral of the Berry connection $A_n(\k)$ across the first Brillouin zone, i.e.,
    \begin{align}\label{def:Zak}
        \Theta_n = \int_{-\pi}^{\pi} A_{n}(\k) \upd\k.
    \end{align}
\end{definition}



To see that the Zak phase is well defined, we want to check that it is invariant $\mathrm{mod}(2\pi)$ with respect to phase changes of the eigenmode $u_n^{[\k]}$. This is because a normalised eigenmode (with $\|u_n^{[\k]}\|=1$) can always be re-defined by changing the phase by an integer multiple of $\k$,
    \begin{align*}
        \tilde{u}^{[\k]}_n = e^{i\z\k}u^{[\k]}_n, \ \z\in\mathbb{N}.
    \end{align*}
    Under this transformation, we can calculate that
    \begin{align*}
        \tilde{A}_n(\k) &= -i \left\langle \frac{\partial \tilde{u}^{[\k]}_n}{\partial x}, \tilde{u}^{[\k]}_n \right\rangle = 
        -i \left\langle \frac{\partial}{\partial x}\Big(e^{i\z\k }u^{[\k]}_n\Big), e^{i\z\k } u^{[\k]}_n \right\rangle \\
        &=-i \int_0^1 \m_0 \left( e^{i\z\k }\frac{\partial u^{[\k]}_n}{\partial x}(x) + i \z  e^{i\z\k }u^{[\k]}_n(x) \right) \overline{ e^{i\z\k }u^{[\k]}_n(x) } \upd x \\
        &= A_n(\k) + \int_{0}^1 \m_0 \z  u^{[\k]}_n(x) \overline{u^{[\k]}_n(x)} \upd x\\
        &= A_n(\k) + \z.
    \end{align*}
    Thus,
    \begin{align*}
        \int_{-\pi}^{\pi} \tilde{A}_n(\k) \upd\k =  \int_{-\pi}^{\pi} A_n(\k) \upd\k + 2\z\pi,
    \end{align*}
    which gives
    \begin{align}
        \tilde{\Theta}_n(\k) = \Theta_n(\k) + 2\z\pi,
    \end{align}
so $\tilde{\Theta}_n$ and ${\Theta}_n$ are equal $\mathrm{mod}(2\pi)$.


\begin{theorem}
    Let us assume that we have mirror symmetry and that the $n$-th eigenvalue of the operator $\mathcal{L}$ is non-degenerate. Then, it holds that $\Theta_n\mathrm{mod}(2\pi) \in \{0,\pi\}$.
\end{theorem}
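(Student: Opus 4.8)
The plan is to exploit mirror symmetry to reduce the Zak phase to a difference of two $\mathbb{Z}_2$-valued quantities read off at the high-symmetry momenta $\k=0$ and $\k=\pi$. First I would fix, for each $\k\in\mathcal{B}$, a \emph{normalised} eigenmode $u_n^{[\k]}$ depending smoothly on $\k$ (possible on a non-degenerate, isolated band). The non-degeneracy hypothesis, together with the two parity lemmas preceding Lemma~\ref{lemma:mu}, shows that $\mathcal{P}u_n^{[\k]}$ and $u_n^{[-\k]}$ are both eigenvectors of $\mathcal{L}(-\k,\w_n)$ for the same eigenvalue, hence proportional as in \eqref{eq:eigproportional}. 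Since both are normalised, the proportionality constant is unimodular, giving a smooth phase $\gamma(\k)\in\mathbb{R}$ with $u_n^{[-\k]}(x) = e^{i\gamma(\k)}(\mathcal{P}u_n^{[\k]})(x) = e^{i\gamma(\k)}u_n^{[\k]}(1-x)$.

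Next I would compute how the Berry connection transforms under $\k\mapsto-\k$. Differentiating the last relation in $\k$, substituting into the definition \eqref{def:Berry} of $A_n(-\k)$, and applying the change of variables $y=1-x$ (which preserves the measure $\m_0\,\upd x$ and the normalisation $\|u_n^{[\k]}\|=1$), I expect the two resulting terms to collapse to $A_n(-\k) = -\gamma'(\k) - A_n(\k)$, that is, $A_n(\k)+A_n(-\k)=-\gamma'(\k)$. Folding the Brillouin-zone integral onto $[0,\pi]$ then gives $\Theta_n = \int_{-\pi}^{\pi}A_n(\k)\,\upd\k = \int_0^{\pi}\big(A_n(\k)+A_n(-\k)\big)\,\upd\k = \gamma(0)-\gamma(\pi)$ by the fundamental theorem of calculus.

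It then remains to evaluate $\gamma$ at the band edges. At $\k=0$ the relation reads $u_n^{[0]} = e^{i\gamma(0)}\mathcal{P}u_n^{[0]}$, and at $\k=\pi$, using $\mathcal{L}(-\pi,\w)=\mathcal{L}(\pi,\w)$ to identify $u_n^{[-\pi]}$ with $u_n^{[\pi]}$, it reads $u_n^{[\pi]} = e^{i\gamma(\pi)}\mathcal{P}u_n^{[\pi]}$. But Lemma~\ref{lemma:mu} asserts $\mathcal{P}u_n^{[\k]}=\pm u_n^{[\k]}$ for $\k\in\{0,\pi\}$, so $e^{i\gamma(0)},e^{i\gamma(\pi)}\in\{+1,-1\}$, i.e. $\gamma(0),\gamma(\pi)\in\{0,\pi\}\ \mathrm{mod}\ 2\pi$. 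Consequently $\Theta_n=\gamma(0)-\gamma(\pi)\in\{0,\pi\}\ \mathrm{mod}\ 2\pi$, which is the claim.

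The step I expect to be the main obstacle is the first one: producing a gauge $u_n^{[\k]}$ that is smooth (so that $\gamma$ is differentiable and the fundamental theorem of calculus applies) and globally consistent across the closed zone $[-\pi,\pi]$, including the correct matching at the identified endpoints $\k=\pm\pi$ via $\mathcal{L}(-\pi,\w)=\mathcal{L}(\pi,\w)$. This is exactly where non-degeneracy of the $n$-th band is used: it guarantees that the spectral projection onto the band is rank one and varies smoothly in $\k$, so a smooth unit eigenvector exists locally and the only residual global datum is the phase $\gamma$, whose endpoint values are precisely the quantities pinned down by Lemma~\ref{lemma:mu}. The remaining manipulations (the change of variables and the cancellations yielding $A_n(\k)+A_n(-\k)=-\gamma'(\k)$) are routine once this gauge is in place.
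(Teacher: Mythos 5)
Your proposal is correct and follows essentially the same route as the paper's proof: non-degeneracy plus mirror symmetry gives $u_n^{[-\k]}=e^{i\gamma(\k)}\mathcal{P}u_n^{[\k]}$, differentiating and folding the Brillouin-zone integral reduces $\Theta_n$ to the difference of $\gamma$ at $\k=0$ and $\k=\pi$, and Lemma~\ref{lemma:mu} pins those endpoint values to $\{0,\pi\}$ mod $2\pi$. The only discrepancy is an overall sign ($\Theta_n=\gamma(0)-\gamma(\pi)$ versus the paper's $\d(\pi)-\d(0)$), which is immaterial since $\{0,\pi\}$ is invariant under negation mod $2\pi$; your explicit attention to the existence of a smooth gauge is a point the paper passes over more quickly.
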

\begin{proof}
    We recall that, for the assumptions for the theorem, $u_n^{[-\k]}$ and $\mathcal{P}u_n^{[\k]}$ are proportional to one another, i.e., there exists $\m\in\mathbb{R}$, depending on $\k$, such that
    \begin{align*}
        u_n^{[-\k]} = \m(\k) \mathcal{P}u_n^{[\k]}.
    \end{align*}
    In addition, we have taken the modes to satisfy $\|u_n\|=1$. Hence, we get $\|\m(\k)\|=1$ and so, we can write
    \begin{align*}
        u_n^{[-\k]} = e^{i\d(\k)} \mathcal{P}u_n^{[\k]},
    \end{align*}
    where $\d(\k)$ is a locally smooth function of $\k$. Then, we see that
    \begin{align*}
            \frac{\upd}{\upd\k} u_n^{[-\k]} = \frac{\upd}{\upd\k} \Big( e^{i\d(\k)} \mathcal{P}u_n^{[\k]} \Big) \Rightarrow \frac{\upd}{\upd\k} u_n^{[-\k]} = ie^{i\d(\k)}\frac{\upd \d}{\upd\k}\mathcal{P}u_n^{[\k]} + \frac{\upd\mathcal{P}u_n^{[\k]}}{\upd\k}e^{i\d(\k)}.
    \end{align*}
    Taking the inner product with $u_n^{[-\k]}$ on both sides, we see that
    \begin{align*}
        \int_{0}^1 \m_0e^{i\d(\k)}\frac{\upd\mathcal{P}u_n^{[\k]}}{\upd\k}(x)\overline{u_n^{[-\k]}}(x) \upd x &= \int_{0}^1 \m_0 e^{i\d(\k)} \frac{\upd\mathcal{P}u_n^{[\k]}}{\upd\k}(x) \overline{e^{i\d(\k)}\mathcal{P}u_n^{[\k]}}(x)  \upd x\\
        &= \int_{0}^1 \m_0  \frac{\upd u_n^{[\k]}}{\upd\k}(1-x) \overline{ u_n^{[\k]}} (1-x)  \upd x\\
        &= - \int_{1}^0 \m_0 \frac{\upd u_n^{[\k]}}{\upd\k}(x) \overline{ u_n^{[\k]}}(x)  \upd x = -i A_n(\k)
    \end{align*}
    and 
    \begin{align*}
        \int_0^1 \m_0 e^{i\d(\k)} \frac{\upd \d}{\upd\k} \mathcal{P}u_n^{[\k]} (x)\overline{u_n^{[-\k]}}(x) \upd x = \frac{\upd \d}{\upd\k} \int_0^1 \m_0 u_n^{[-\k]} (x)\overline{u_n^{[-\k]}}(x) \upd x = i\frac{\upd \d}{\upd\k}. 
    \end{align*}
    Thus, we get 
    \begin{align*}
        iA_n(-\k) = -iA_n(\k) + i\frac{\upd \d}{\upd\k}. 
    \end{align*}
    Multiplying by $i$ and integrating over $[0,\pi]$, we get
    \begin{align*}
        -\int^{\pi}_0 A_n(-\k) \upd\k = \int_0^{\pi }A_n(\k) \upd\k - \d(\pi) + \d(0),
    \end{align*}
    which is
    \begin{align*}
        -\int_{-\pi}^0 A_n(\k) \upd\k = \int_0^{\pi }A_n(\k) \upd\k - \d(\pi) + \d(0).
    \end{align*}
    Thus, we get
    \begin{align*}
        \Theta_n = \d(\pi) - \d(0).
    \end{align*}
    From Lemma \ref{lemma:mu}, we have that $\m(\k)\in\{\pm,1\}$, for $\k=0,\pi$. This implies that $\d(\k)\mathrm{mod}(2\pi)\in\{0,\pi\}$, for $\k=0,\pi$. If $\d(\pi)\mathrm{mod}(2\pi) = \d(0)\mathrm{mod}(2\pi)$, then $\Theta_n=0$. If $\d(\pi)\mathrm{mod}(2\pi) \ne \d(0)\mathrm{mod}(2\pi)$, then 
    the phase difference is always $\pi$. Thus, we get
    \begin{align*}
        \Theta_n\mathrm{mod}(2\pi) \in \{0,\pi\}.
    \end{align*}
    This concludes the proof.
\end{proof}

\section{Asymptotic behaviour}\label{section:AB}

We wish to study the asymptotic behaviour of the modes as $x\to\pm\infty$. For this, we will make use of the transfer matrix associated to this problem. 

In the sections to follow, we will consider permittivity functions $\ve(x,\w)$ which are piecewise constant with respect to $x$ in each particle. This method still applies for permittivities which are mirror symmetric with respect to $x$, but by suppressing this dependence, we obtain explicit expressions for the transfer matrices, and so, we have a more qualitative result.

\subsection{Transfer matrix method}\label{subsection:Transfer_matrix_method}

The transfer matrix method is a way of describing the mode $u$ and its spatial derivative $u'$ at each point on the structure with respect to the initial data vector $\Big(u(0),u'(0)\Big)^{\top}$.

We define the segment matrices $\mathscr{T}(x,\w)$ by
\begin{align}\label{def:segmentmatrix}
    \mathscr{T}(l_x,\w) := 
    \begin{pmatrix}
        \cos(\sqrt{\m_0\ve(x,\w)}\w l_x) & \frac{\sin(\sqrt{\m_0\ve(x,\w)}\w l_x)}{\sqrt{\m_0\ve(x,\w)}\w} \\
        -\sqrt{\m_0\ve(x,\w)}\w \sin(\sqrt{\m_0\ve(x,\w)}\w l_x) & \cos(\sqrt{\m_0\ve(x,\w)}\w l_x)
    \end{pmatrix},
\end{align}
where $l_x$ denotes the length of the part of the segment on which $x$ lies. 

Let us consider an interval $I$ which is constituted by $N$ segments of length $l_i$ each, $i=1,\dots,N$. Then, we define the transfer matrix $T_I(\w)$ over the interval $I$ to be
\begin{align}\label{def:transfermatrix}
    T_I(\w) := \prod_{i=1}^N \mathscr{T}(l_i,\w).
\end{align}
If we define the vector $\tilde{u}(x) := \Big( u(x) , u'(x) \Big)^\top$, then the transfer matrix method describes the mode $u$ at each point $x$ based on the initial data at a point $x_0$, i.e.
\begin{align}
    \tilde{u}(x) = T_{[x_0,x]}(\w)  \tilde{u}(x_0).
\end{align}
It is clear that the segment matrices $\mathscr{T}$ satisfy $\det(\mathscr{T}(x,\w))=1$ for any $x$ and $\omega$. Hence, also, the transfer matrix $T$ satisfies $\det(T(x,\w))\equiv 1$.

The mirror symmetry induced to the system gives
\begin{align*}
    \ve(x_n + h,\w) = \ve(x_{n+1}-h,\w), \ \ h\in[0,1),\  n\in\mathbb{N}\setminus\{0\}.
\end{align*}
Let $T^{[j]}_p$ denote the transfer matrix over one periodic cell of the material $j$, with $j\in\{A,B\}$. Then, it holds, as in \cite{craster2023asymptotic, schoenberg1983properties}, that
\begin{align}\label{eq:transfer}
    \begin{pmatrix}
        u(x_{n+1}) \\ u'(x_{n+1})
    \end{pmatrix}
    = \mathbbm{1}_{\{n\geq0\}} T_p^{[B]}
    \begin{pmatrix}
        u(x_{n}) \\ u'(x_{n})
    \end{pmatrix}
    + \mathbbm{1}_{\{n<0\}} T_p^{[A]}
    \begin{pmatrix}
        u(x_{n}) \\ u'(x_{n})
    \end{pmatrix},
\end{align}
for $n\in\mathbb{Z}$. Also, the symmetry of the system gives
\begin{align}\label{eq:inversetransfer}
    \begin{pmatrix}
        u(x_{n-1}) \\ u'(x_{n-1})
    \end{pmatrix}
    = \mathbbm{1}_{\{n\geq0\}} S T_p^{[B]} S
    \begin{pmatrix}
        u(x_{n}) \\ u'(x_{n})
    \end{pmatrix}
    + \mathbbm{1}_{\{n<0\}} S T_p^{[A]} S
    \begin{pmatrix}
        u(x_{n}) \\ u'(x_{n})
    \end{pmatrix},
\end{align}
for $n\in\mathbb{Z}$, where the matrix $S$ is given by
\begin{align*}
    S := \begin{pmatrix}
       1 & 0 \\
       0 & -1
    \end{pmatrix}.
\end{align*}
Here, let us note that with a direct calculation, we can see that $S=S^{-1}$. \\
Applying the quasiperiodic boundary conditions, we get, for each material $i$,
\begin{align*}
    \tilde{u}(x_{n+1}) = e^{i\k} \tilde{u}(x_n).
\end{align*}
Combining this with \eqref{eq:transfer}, we get the following problem
\begin{align}
    \Big( T^{[j]}_p(\w) - e^{i\k} I \Big) \tilde{u}(x_n) = 0.
\end{align}

\subsubsection{Spectral properties}

Let us now state certain spectral properties of the transfer matrices.

\begin{lemma}
    Let $\w\in\mathbb{R}\setminus\{\w_p^{(\pm)}\}$. Then, $T^{[j]}_p$ has real eigenvalues denoted by $\l^{[j]}_1$ and $\l^{[j]}_2$, satisfying $|\l^{[j]}_1|<1$ and $|\l^{[j]}_2|>1$, for $j\in\{A,B\}$.
\end{lemma}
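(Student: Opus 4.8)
The plan is to exploit the two algebraic constraints that $T_p^{[j]}$ inherits from its construction: it is a real $2\times 2$ matrix with unit determinant. The determinant claim is already in hand, since each segment matrix $\mathscr{T}$ satisfies $\det\mathscr{T}=1$ and $T_p^{[j]}$ is a finite product of such matrices; realness follows because $\w$ and the permittivities $\ve_1(\w),\ve_2(\w)$ are all real (the undamped assumption), so every entry in \eqref{def:segmentmatrix} is real. Writing $t:=\operatorname{tr}(T_p^{[j]})$, the characteristic polynomial is $\l^2 - t\l + 1 = 0$, so that $\l_1^{[j]}\l_2^{[j]} = 1$ and $\l_1^{[j]} + \l_2^{[j]} = t$, with $\l^{[j]}_{1,2} = \tfrac{1}{2}\bigl(t \pm \sqrt{t^2 - 4}\bigr)$.

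Next I would connect the trace to the dispersion relation. By construction $t = \operatorname{tr}(T_p^{[j]})$ is exactly the Lyapunov function $f(\w)$ appearing in \eqref{eq:disp_rel}, so that the Floquet--Bloch condition reads $2\cos\k = t$. The excluded frequencies $\w_p^{(\pm)}$ are the band-edge frequencies, at which $t = \pm 2$ and $\k\in\{0,\pi\}$. For $\w$ lying strictly inside a band gap there is no real $\k$ solving $2\cos\k = t$, which forces $|t| > 2$; hence $t^2 - 4 > 0$ and the two eigenvalues are real and distinct.

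Finally I would read off the magnitudes. Since $\l_1^{[j]}\l_2^{[j]} = 1$ with both eigenvalues real and $|t|\neq 2$, neither eigenvalue equals $\pm 1$, so $|\l_1^{[j]}|\neq 1 \neq |\l_2^{[j]}|$; combined with $|\l_1^{[j]}|\,|\l_2^{[j]}| = 1$, exactly one eigenvalue lies strictly inside and the other strictly outside the unit circle. Relabelling so that $|\l_1^{[j]}| < 1 < |\l_2^{[j]}|$ then gives the claim.

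The main obstacle is not the algebra, which is routine, but pinning down the hypothesis. The conclusion (real eigenvalues straddling the unit circle) holds precisely when $|t| > 2$, i.e.\ for $\w$ in a band gap, whereas inside the spectral bands the eigenvalues form a complex-conjugate pair on the unit circle. Thus the essential step is to argue, via the dispersion relation, that the relevant $\w$ (those excluded from $\{\w_p^{(\pm)}\}$ and lying in the gap where the sought localised modes decay) indeed satisfy $|t| > 2$, rather than the eigenvalue computation itself. This is also what ties the lemma to its intended use: the eigenvalue $\l_1^{[j]}$ inside the unit circle will generate the mode decaying at $+\infty$, and $\l_2^{[j]}$ the one decaying at $-\infty$.
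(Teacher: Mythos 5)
Your proof is correct and follows essentially the same route as the paper's: both arguments rest on $\det T_p^{[j]}=1$ (so the eigenvalues are either a real pair $\l,1/\l$ off the unit circle or a complex-conjugate pair on it) and both use the dispersion relation to exclude the unit-circle case. The paper phrases the exclusion as a contradiction --- if $\l_1^{[j]}=e^{i\k}$ then $\det\big(T_p^{[j]}-e^{i\k}I\big)=0$, i.e.\ $\w$ would be a Bloch frequency --- whereas you phrase it directly through the trace, $2\cos\k=t$ admitting no real solution $\k$ exactly when $|t|>2$; these are the same argument in contrapositive form. One misreading to correct: the excluded set $\{\w_p^{(\pm)}\}$ consists of the poles of the dispersive permittivity (cf.\ \eqref{def:poles}), not the band-edge frequencies. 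This does not damage your argument, because your closing paragraph correctly diagnoses the real issue: the stated conclusion holds precisely for $\w$ in a band gap, a hypothesis the lemma omits but which both your proof and the paper's implicitly use --- it is exactly what makes $\det\big(T_p^{[j]}-e^{i\k}I\big)=0$ ``a contradiction'' in the paper's version.
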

\begin{proof}
    Let $i\in\{A,B\}$. Since $\det(T^{[j]}_p(\w))=1$, we get either
    \begin{align*}
        \l^{[j]}_1,\l^{[j]}_2\in\mathbb{R}, \ \ \text{with } \ |\l^{[j]}_1|<1 \ \text{ and } \ |\l^{[j]}_2|>1,
    \end{align*}
    or
    \begin{align*}
        \l^{[j]}_1,\l^{[j]}_2\in\mathbb{C}, \ \ \text{with } \ |\l^{[j]}_1|=|\l^{[j]}_2|=1 \ \text{ and } \ \l^{[j]}_1 = \overline{\l^{[j]}_2}.
    \end{align*}
    If the second case holds, then there exists $\k$ such that $\l^{[j]}_1=e^{i\k}$, which implies that $\det\Big(T^{[j]}_p(\w) - e^{i\k} I\Big)=0$ which gives a contradiction. Hence, the first case holds, and so, we obtain the desired result.
\end{proof}
We use this result to obtain information about the asymptotic behaviour of the modes $u(x_n)$ as $n\to\pm\infty$.

\begin{theorem}\label{thm:AB:spectral}
    The eigenfrequency $\w$ of a localised eigenmode of the Helmholtz problem \eqref{eq:Helmholtz} must satisfy
    \begin{align}
        \begin{pmatrix}
            -V_{21}^{[B]}(\w) & V_{11}^{[B]}(\w)
        \end{pmatrix}
        \begin{pmatrix}
            V_{11}^{[A]}(\w) \\ -V_{21}^{[A]}(\w)
        \end{pmatrix}
        = 0,
    \end{align}
    where $(V_{11}^{[A]}(\w) , V_{21}^{[A]}(\w))^{\top}$ is the eigenvector of the transfer matrix $T_p^{[A]}$ associated to the eigenvalue $|\l_1^{[A]}|<1$ and $(V_{11}^{[B]}(\w) , V_{21}^{[B]}(\w))^{\top}$ is the eigenvector of the transfer matrix $T_p^{[B]}$ associated to the eigenvalue $|\l_1^{[B]}|<1$.
\end{theorem}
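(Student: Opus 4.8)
The plan is to characterise a localised eigenmode by the requirement that its state vector $\tilde{u}(x) = (u(x), u'(x))^{\top}$ decay both as $x \to +\infty$ and as $x \to -\infty$, and to convert each decay requirement into an algebraic constraint on the single initial vector $\tilde{u}(x_0) = (u(0), u'(0))^{\top}$ via the spectral splitting of the period transfer matrices established in the preceding lemma: each $T_p^{[j]}$ has a contracting eigenvalue $\lambda_1^{[j]}$ with $|\lambda_1^{[j]}| < 1$ and an expanding eigenvalue $\lambda_2^{[j]}$ with $|\lambda_2^{[j]}| > 1$, for $j \in \{A, B\}$. Because $u$ and $u'$ are continuous at the interface by \eqref{eq:continuityconditions}, the vector $\tilde{u}(x_0)$ is unambiguous and must meet both constraints simultaneously.

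For the behaviour at $+\infty$ (material $B$), iterating \eqref{eq:transfer} gives $\tilde{u}(x_n) = (T_p^{[B]})^n \tilde{u}(x_0)$ for $n \geq 0$. Expanding $\tilde{u}(x_0)$ in the eigenbasis of $T_p^{[B]}$, its component along the $\lambda_2^{[B]}$-eigenvector is amplified by $(\lambda_2^{[B]})^n \to \infty$; since the two eigenvectors are linearly independent, no cancellation can prevent blow-up, so boundedness (hence decay) forces this component to vanish. Therefore $\tilde{u}(x_0)$ must be a scalar multiple of the contracting eigenvector $(V_{11}^{[B]}, V_{21}^{[B]})^{\top}$. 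Decay of the discrete endpoint sequence transfers to decay of the full continuous profile because, for fixed $\omega$, $u$ on each cell is reconstructed from the endpoint data by the bounded segment matrices \eqref{def:segmentmatrix}.

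For the behaviour at $-\infty$ (material $A$), the relevant leftward period map is $S T_p^{[A]} S$ from \eqref{eq:inversetransfer}. It is a similarity transform of $T_p^{[A]}$, hence shares its eigenvalues, and if $(V_{11}^{[A]}, V_{21}^{[A]})^{\top}$ is the $\lambda_1^{[A]}$-eigenvector of $T_p^{[A]}$ then, using $S^2 = I$, the vector $S (V_{11}^{[A]}, V_{21}^{[A]})^{\top} = (V_{11}^{[A]}, -V_{21}^{[A]})^{\top}$ is the corresponding contracting eigenvector of $S T_p^{[A]} S$. Running the same amplification argument leftward, decay as $x \to -\infty$ forces $\tilde{u}(x_0)$ to be a scalar multiple of $(V_{11}^{[A]}, -V_{21}^{[A]})^{\top}$.

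Combining the two constraints, a nontrivial localised mode exists precisely when the two one-dimensional subspaces coincide, i.e. when $(V_{11}^{[B]}, V_{21}^{[B]})^{\top}$ and $(V_{11}^{[A]}, -V_{21}^{[A]})^{\top}$ are parallel. In $\mathbb{R}^2$ parallelism is equivalent to the vanishing of the determinant of the $2 \times 2$ matrix with these columns; since the row $(-V_{21}^{[B]}, V_{11}^{[B]})$ is the quarter-turn rotation of the $B$-eigenvector and is thus orthogonal to it, demanding its orthogonality to $(V_{11}^{[A]}, -V_{21}^{[A]})^{\top}$ expresses exactly this parallelism, which is the stated scalar equation. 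I expect the main obstacle to be the rigorous justification that decay of the eigenmode is \emph{equivalent} to the vanishing of the expanding spectral component: one must exclude bounded-but-non-decaying behaviour with a nonzero expanding coefficient (handled by $|\lambda_2^{[j]}| > 1 > |\lambda_1^{[j]}|$ together with eigenvector independence) and carefully pass from the discrete endpoint sequence to the continuous mode, including the finitely many transfer steps near the interface that link $\tilde{u}(x_0)$ to the asymptotic regime.
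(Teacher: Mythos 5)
Your proposal is correct and takes essentially the same approach as the paper: both iterate the period maps $(T_p^{[B]})^{n}$ and $S(T_p^{[A]})^{|n|}S$, use the spectral splitting $|\lambda_1^{[j]}|<1<|\lambda_2^{[j]}|$ to force $\tilde{u}(x_0)$ into the contracting eigendirections $(V_{11}^{[B]},V_{21}^{[B]})^{\top}$ and $(V_{11}^{[A]},-V_{21}^{[A]})^{\top}$ (the latter via $S$-conjugation), and encode compatibility of the two constraints as the stated scalar equation. Your phrasing through coincidence of the two one-dimensional contracting subspaces is just a geometric restatement of the paper's step of substituting the proportionality from the second decay condition into the first, and your added care about excluding nonzero expanding components and passing from endpoint decay to decay of the continuous profile only strengthens the argument.
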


\begin{proof}
    For $n\in\mathbb{N}_{>0}$, we have
    \begin{align}\label{eq:AB:n>0}
        \begin{pmatrix}
            u(x_n)\\u'(x_n)
        \end{pmatrix}
        = (T^{[B]}_p)^n(\w) 
        \begin{pmatrix}
            u(x_0)\\u'(x_0)
        \end{pmatrix}
        = V^{[B]}
        \begin{pmatrix}
            (\l^{[B]}_1)^n & 0\\
            0 & (\l^{[B]}_2)^n
        \end{pmatrix}
        (V^{[B]})^{-1}
        \begin{pmatrix}
            u(x_0)\\u'(x_0)
        \end{pmatrix},
    \end{align}
    and for $n\in\mathbb{N}_{<0}$, from the symmetry condition \eqref{eq:inversetransfer}, we have
    \begin{align}\label{eq:AB:n<0}
        \begin{pmatrix}
            u(x_n)\\u'(x_n)
        \end{pmatrix}
        &= S T^{[A]}_p S
        \begin{pmatrix}
            u(x_{n+1})\\u'(x_{n+1})
        \end{pmatrix}
        = S (T^{[A]}_p)^{|n|} S
        \begin{pmatrix}
            u(x_{0})\\u'(x_{0})
        \end{pmatrix}\\
        &= S V^{[A]} 
        \begin{pmatrix}
            (\l^{[A]}_1)^n & 0\\
            0 & (\l^{[A]}_1)^n
        \end{pmatrix}
        (V^{[A]})^{-1} S
        \begin{pmatrix}
            u(x_{0})\\u'(x_{0})
        \end{pmatrix},
    \end{align}
    where $\l^{[A]}_1,\l^{[A]}_2$, resp. $\l^{[B]}_1,\l^{[B]}_2$, are the eigenvalues of $T^{[A]}_p$, resp. $T^{[B]}_p$, with $|\l^{[A]}_1|<1$ and $|\l^{[A]}_2|>1$, resp. $|\l^{[B]}_1|<1$ and $|\l^{[B]}_2|>1$, and $V^{[A]}$, resp. $V^{[B]}$, is the matrix of the associated eigenvectors. We are looking for localised eigenmodes satisfying
    \begin{align*}
        \lim_{n\to\pm\infty}u(x_n) = 0 \ \ \ \text{ and } \ \ \ \lim_{n\to\pm\infty}u'(x_n) = 0.
    \end{align*}
    We know that
    \begin{align*}
        \lim_{n\to\pm\infty}(\l^{[A]}_1)^{|n|} = \lim_{n\to\pm\infty}(\l^{[B]}_1)^{|n|} = 0 \ \ \ \text{ and } \ \ \  \lim_{n\to\pm\infty}(\l^{[A]}_2)^{|n|} = \lim_{n\to\pm\infty}(\l^{[B]}_2)^{|n|} \ne 0.
    \end{align*}
    Thus, we wish to find $\w,u(x_0)$ and $u'(x_0)$ such that
    \begin{align}\label{AB:conditions}
        \begin{pmatrix}
            -V^{[B]}_{21} & V^{[B]}_{11}
        \end{pmatrix}
        \begin{pmatrix}
            u(x_0) \\ u'(x_0)
        \end{pmatrix} = 0
        \ \ \ \text{ and } \ \ \ 
        \begin{pmatrix}
            -V^{[A]}_{21} & V^{[A]}_{11}
        \end{pmatrix}
        S
        \begin{pmatrix}
            u(x_0) \\ u'(x_0)
        \end{pmatrix} = 0,
    \end{align}
    since $(V^{[j]})^{-1}=\frac{1}{\det(V)}\begin{pmatrix}
        V^{[j]}_{22} & -V^{[j]}_{12}\\
        -V^{[j]}_{21} & V^{[j]}_{11}
    \end{pmatrix}$, $j\in\{A,B\}$ and so we multiply the eigenvalue $(\l^{[j]}_2)^{|n|}$, $j\in\{A,B\}$ by the second row vector. Now, we observe that the second equation in \eqref{AB:conditions} shows that $(u(x_0),u'(x_0))$ is proportional to $(V^{[B]}_{11},-V^{[B]}_{21})$. Applying this to the first equation in \eqref{AB:conditions}, we get
    \begin{align*}
        \begin{pmatrix}
            -V^{[B]}_{21} & V^{[B]}_{11}
        \end{pmatrix}
        \begin{pmatrix}
            V^{[A]}_{11} \\ -V^{[A]}_{21}
        \end{pmatrix} = 0,
    \end{align*}
    which gives the desired result.
\end{proof}

\subsubsection{Eigenmode decay}

From this, we obtain the result concerning the decay of the localised eigenmodes as $n\to\pm\infty$.

\begin{corollary}
    A localised eigenmode $u$ of \eqref{eq:Helmholtz}, posed on a medium constituted by two semi-infinite arrays of different halide perovskites, and its associated eigenfrequency $\w$ must satisfy 
    \begin{align*}
        u(x_n) = O\Big( |\l^{[A]}_1(\w)|^{|n|} \Big) \ \ \text{ and } \ \ u'(x_n) = O\Big( |\l^{[A]}_1(\w)|^{|n|} \Big) \ \ \text{ as } n\to-\infty,
    \end{align*}
    \begin{align*}
        u(x_n) = O\Big( |\l^{[B]}_1(\w)|^{|n|} \Big) \ \ \text{ and } \ \ u'(x_n) = O\Big( |\l^{[B]}_1(\w)|^{|n|} \Big) \ \ \text{ as } n\to+\infty,
    \end{align*}
    where $\l^{[A]}_1$ is the eigenvalue of $T^{[A]}_p$ satisfying $|\l^{[A]}_1(\w)|<1$ and $\l^{[B]}_1$ is the eigenvalue of $T^{[B]}_p$ satisfying $|\l^{[B]}_1(\w)|<1$.
\end{corollary}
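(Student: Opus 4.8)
The plan is to read the decay rates directly off the eigendecompositions \eqref{eq:AB:n>0} and \eqref{eq:AB:n<0} already established in the proof of Theorem~\ref{thm:AB:spectral}. The essential point is that the two interface conditions \eqref{AB:conditions}, which select the admissible initial data $\tilde{u}(x_0)=(u(x_0),u'(x_0))^{\top}$, are precisely the conditions that annihilate the \emph{growing}-mode contributions on each side. Once this is observed, only the decaying eigenvalue survives in each limit, and the stated asymptotics follow immediately.

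First, for $n>0$ I would start from \eqref{eq:AB:n>0}, which expresses $\tilde{u}(x_n)$ as $V^{[B]}\,\mathrm{diag}\big((\l^{[B]}_1)^n,(\l^{[B]}_2)^n\big)\,(V^{[B]})^{-1}\tilde{u}(x_0)$. Using the explicit form of the inverse recorded in the proof of Theorem~\ref{thm:AB:spectral}, the first condition in \eqref{AB:conditions}, namely $(-V^{[B]}_{21},V^{[B]}_{11})\,\tilde{u}(x_0)=0$, states (up to the nonzero factor $\det V^{[B]}$) that the second row of $(V^{[B]})^{-1}$ annihilates $\tilde{u}(x_0)$; equivalently, the expansion coefficient multiplying $(\l^{[B]}_2)^n$ vanishes. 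Hence $\tilde{u}(x_n)$ is a fixed multiple of the first eigenvector scaled by $(\l^{[B]}_1)^n$, and since $|\l^{[B]}_1(\w)|<1$ we conclude $u(x_n),u'(x_n)=O\big(|\l^{[B]}_1(\w)|^{|n|}\big)$ as $n\to+\infty$.

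For $n<0$ I would argue identically from \eqref{eq:AB:n<0}. Because $S=S^{-1}$, the backward propagation reads $\tilde{u}(x_n)=S\,V^{[A]}\,\mathrm{diag}\big((\l^{[A]}_1)^{|n|},(\l^{[A]}_2)^{|n|}\big)\,(V^{[A]})^{-1}S\,\tilde{u}(x_0)$, where the conjugated matrix $S\,T^{[A]}_p\,S$ is similar to $T^{[A]}_p$ and therefore retains the eigenvalues $\l^{[A]}_1,\l^{[A]}_2$. The second condition in \eqref{AB:conditions}, $(-V^{[A]}_{21},V^{[A]}_{11})\,S\,\tilde{u}(x_0)=0$, kills the expansion coefficient of $(\l^{[A]}_2)^{|n|}$, so only the $(\l^{[A]}_1)^{|n|}$ term remains. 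As $|\l^{[A]}_1(\w)|<1$, this gives $u(x_n),u'(x_n)=O\big(|\l^{[A]}_1(\w)|^{|n|}\big)$ as $n\to-\infty$, the fixed matrices $S$ and $V^{[A]}$ only contributing to the implicit constant.

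Since everything is a direct consequence of the eigendecomposition and the already-imposed conditions \eqref{AB:conditions}, no single step is a genuine obstacle; this is essentially bookkeeping. The only point requiring care is the $S$-conjugation in the case $n<0$: one must verify that conjugating by $S$ leaves the spectrum of $T^{[A]}_p$ unchanged while merely transforming its eigenvectors, so that the decaying eigenvalue of $S\,T^{[A]}_p\,S$ is still $\l^{[A]}_1$ and the correct condition to annihilate the growing direction is the one appearing in \eqref{AB:conditions} with $S$ applied to the initial data.
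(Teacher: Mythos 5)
Your proposal is correct and takes essentially the same approach as the paper: both arguments read the decay directly off the eigendecompositions \eqref{eq:AB:n>0} and \eqref{eq:AB:n<0}, using the two conditions in \eqref{AB:conditions} to kill the $\l^{[B]}_2$- and $\l^{[A]}_2$-contributions, so that $\tilde{u}(x_n)$ is a multiple of $(\l^{[B]}_1)^n$ times the decaying eigenvector for $n>0$ and of $(\l^{[A]}_1)^{|n|}$ times the $S$-image of the decaying eigenvector for $n<0$. The only cosmetic difference is phrasing: the paper states that $\tilde{u}(x_0)$ (resp.\ $S\tilde{u}(x_0)$) is proportional to the decaying eigenvector of $T^{[B]}_p$ (resp.\ $T^{[A]}_p$), whereas you state equivalently that the expansion coefficient of the growing mode vanishes.
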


\begin{proof}
    This is a direct result from the previous theorem. Indeed, from Theorem \ref{thm:AB:spectral}, we have that $(u(x_0),u'(x_0))^{\top}$ is proportional to $(V^{[B]}_{11},-V^{[B]}_{21})^{\top}$. Thus, we get from \eqref{eq:AB:n>0}, for $n\in\mathbb{Z}_{>0}$
    \begin{align*}
        \begin{pmatrix}
            u(x_n) \\ u'(x_n)
        \end{pmatrix}
        = (\l^{[B]}_1)^n(\w) 
        \begin{pmatrix}
            V^{[B]}_{11} \\ V^{[B]}_{21}    
        \end{pmatrix}
    \end{align*}
    and from \eqref{eq:AB:n<0}, for $n\in\mathbb{Z}_{<0}$,
    \begin{align*}
        \begin{pmatrix}
            u(x_n) \\ u'(x_n)
        \end{pmatrix}
        = (\l^{[A]}_1)^{|n|} S  
        \begin{pmatrix}
            V^{[A]}_{11} \\ V^{[A]}_{21}    
        \end{pmatrix}.
    \end{align*}
    This concludes the proof.
\end{proof}


\section{Robustness with respect to imperfections}\label{section:TP}

Now that we have established the existence of an interface mode in each band gap for dispersive materials, which decays at infinity, we wish to study the effect of system perturbations. This happens in two different ways:
\begin{itemize}
    \item Perturbations on the material parameters which have no effect on the symmetry inside each periodic cell. For this, we will introduce perturbations to the permittivity functions of the materials.
    \item Perturbations on the symmetry inside each periodic cell. To study this effect we will modify the length of two particles in each periodic cell, so that the symmetry breaks.
\end{itemize}

\subsection{Theoretical results}

\subsubsection{Permittivity perturbation} \label{subsubsection:theory:perm_pert}

First, we will study changes in the materials while the symmetry is preserved. To show this effect, we introduce the perturbation function $f:\mathbb{R}\to\mathbb{R}$, given by
\begin{align}\label{def:perturbation}
    f(x,\w) := 
    \begin{cases}
        &f_1(\w), \ \ x \in D^{[1]},\\
        &f_2(\w), \ \ x \in D^{[2]}.
    \end{cases}
\end{align}
The perturbation function $f$ is a piecewise smooth function of the frequency $\w$ and satisfies $\frac{\partial f}{\partial\w}<+\infty$. Then, we define $\widetilde{\ve}$ to be the perturbed permittivity of the system, given by
\begin{align*}
    \widetilde{\ve}(x,\w) := \ve(x,\w) + \d f(\w) = 
    \begin{cases}
        &\ve_1(\w) + \d f_1(\w), \ \ x \in D^{[1]},\\
        &\ve_2(\w) + \d f_2(\w), \ \ x \in D^{[2]},
    \end{cases}
\end{align*}
where $\d>0$ is the perturbation parameter. Then, we obtain the following perturbed problem
\begin{align}\label{eq:Helmholtz:pert} 
    \begin{cases}
        &\widetilde{\mathcal{L}}(\k,\w)u = \w^2 u,\\
        &u(x+1) = e^{i\k}u(x),
    \end{cases}
\end{align}
where
\begin{align}
    \widetilde{\mathcal{L}}u := -\frac{1}{\m_0} \frac{\upd}{\upd x} \left( \frac{1}{\widetilde{\ve}(x,\w)} \frac{\upd u}{\upd x} \right).
\end{align}
The following definition follows naturally from the perturbation of the system.

\begin{definition}[Perturbed impedance function]
    We define the surface impedances associated to the perturbed Helmholtz problem \eqref{eq:Helmholtz:pert} by
    \begin{align}
        \widetilde{Z}^-(\w)= -\frac{u(x_0^{-})}{\frac{1}{\widetilde{\ve}_1(\w)}\frac{\upd}{\upd x}u(x_0^{-})} \ \ \ \text{ and } \ \ \ \widetilde{Z}^+(\w)= \frac{u(x_0^{+})}{\frac{1}{\widetilde{\ve}_1(\w)}\frac{\upd}{\upd x}u(x_0^{+})}, \ \ \ \w\in\mathbb{R}.
    \end{align}
\end{definition}

We wish to show that, as $\d\to0$, an interface mode still exists. For the existence, Lemma \ref{lemma:impedance} holds for the perturbed impedance functions. In addition, we observe that the symmetry of the system has been preserved by the perturbation of the permittivity. Hence, it is sufficient to show that the perturbed impedance functions remain decreasing. Then, because of the symmetry of the system, the rest of the arguments will hold and so, the result will follow.  

\begin{theorem}\label{thm:pertZ:decrease}
    Let us assume that $\w\in\mathfrak{A}$, where $\mathfrak{A}\subset\mathbb{R}$ is a band gap. Then, the perturbed surface impedances, as $\d\to0$, are decreasing functions of the frequency $\w$, i.e.
    \begin{align*}
        \frac{\upd \widetilde{Z}^+}{\upd \w}<0, \ \ \frac{\upd \widetilde{Z}^-}{\upd \w}<0, \ \ \ \text{ for } \w\in\mathfrak{A}.
    \end{align*}
\end{theorem}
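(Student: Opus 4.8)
The plan is to revisit the monotonicity computation for the unperturbed impedance carried out in Appendix~\ref{app:monotonicity} and to track exactly how it is altered by the perturbation $\widetilde{\ve} = \ve + \d f$. Recall that the proof of Theorem~\ref{thm:Z:decrease} expresses $\frac{\upd Z^{\pm}}{\upd\w}$ as (minus) an integral over the relevant semi-infinite array of a quantity built from $\m_0$, $\w$, the mode $u$ and its frequency-derivative $\phi = \frac{\upd u}{\upd\w}$, together with a contribution proportional to $\frac{\upd\ve}{\upd\w}$. The decisive feature is that these two contributions enter with the same sign: the first is strictly positive because $\m_0 > 0$ and $\w \ne 0$ throughout the gap, while the second is non-negative precisely because of the standing assumption $\frac{\upd\ve}{\upd\w}\geq 0$. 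Together they force $\frac{\upd Z^{\pm}}{\upd\w}$ to be strictly negative, and crucially the strict negativity already comes from the first, dispersion-independent term alone.

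First I would write down the analogous expression for $\frac{\upd\widetilde{Z}^{\pm}}{\upd\w}$. Since $\widetilde{\ve}$ enters the perturbed operator $\widetilde{\mathcal{L}}$ in exactly the same way that $\ve$ enters $\mathcal{L}$, the derivation is structurally identical; the only changes are $\ve \mapsto \widetilde{\ve}$ and
\[
    \frac{\upd\ve}{\upd\w} \ \longmapsto\ \frac{\upd\widetilde{\ve}}{\upd\w} = \frac{\upd\ve}{\upd\w} + \d\,\frac{\upd f}{\upd\w}.
\]
The point to exploit is that the perturbation of the dispersive term is $\d\,\frac{\upd f}{\upd\w}$, which, by the standing hypothesis $\frac{\partial f}{\partial\w} < +\infty$, is bounded uniformly on the band gap $\mathfrak{A}$.

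Then I would close the argument by a continuity and domination estimate in $\d$. The strictly positive base term (coming from $\m_0$ and $\w^2$) is bounded away from zero on the compact gap $\mathfrak{A}$ and depends continuously on $\d$, whereas the extra contribution $\d\,\frac{\upd f}{\upd\w}$ to the dispersive term is $O(\d)$ uniformly on $\mathfrak{A}$. Hence $\frac{\upd\widetilde{Z}^{\pm}}{\upd\w} \to \frac{\upd Z^{\pm}}{\upd\w} < 0$ as $\d\to0$, and for $\d$ small enough the possibly-negative cross term cannot overcome the strictly negative base term, so $\frac{\upd\widetilde{Z}^{\pm}}{\upd\w} < 0$ on $\mathfrak{A}$. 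I would also record that the perturbation preserves the mirror symmetry of each cell and leaves the gap open for small $\d$, so that the band-edge impedance values of Lemma~\ref{lemma:Z:eval} carry over verbatim and the interface-mode existence conclusion can be replayed for $\widetilde{Z}^{\pm}$.

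The hard part will be the justification that the eigenmode $u$, its spatial derivative $u'$, and the frequency-derivative $\phi$ all depend continuously (indeed smoothly) on $\d$, which is what legitimises passing to the limit inside the integral expression for $\frac{\upd\widetilde{Z}^{\pm}}{\upd\w}$; this is a standard but non-trivial application of analytic perturbation theory for the non-linear eigenvalue problem. A subtler point is that one must \emph{not} try to argue merely that $\frac{\upd\widetilde{\ve}}{\upd\w}\geq 0$ is preserved: near frequencies where $\frac{\upd\ve}{\upd\w}$ vanishes, the sign of $\frac{\upd\widetilde{\ve}}{\upd\w}$ can flip, and it is essential that the strict negativity of $\frac{\upd\widetilde{Z}^{\pm}}{\upd\w}$ be supplied by the $\d$-independent base term rather than by the monotonicity of the permittivity.
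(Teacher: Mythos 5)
Your proposal follows essentially the same route as the paper's own proof in Appendix~\ref{app:monotonicity}: repeat the Wronskian computation of Theorem~\ref{thm:Z:decrease} with $\ve$ replaced by $\widetilde{\ve}=\ve+\d f$, observe that the dispersive term changes only by $\d\,\frac{\upd f}{\upd\w}$, which is $O(\d)$ by the boundedness hypothesis on $\frac{\partial f}{\partial\w}$, and conclude that in the limit $\d\to0$ the strictly negative, dispersion-independent term $-2\m_0\w\int_0^{+\infty}u^2\,\upd x$ dominates. Your additional remarks — that the continuous dependence of $u$, $u'$ and $\phi$ on $\d$ deserves justification, and that one should not (and need not) claim $\frac{\upd\widetilde{\ve}}{\upd\w}\geq0$ for $\d>0$ — are sound refinements of points the paper passes over silently, but they do not change the substance of the argument.
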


Thus, we have shown that arbitrarily small perturbations on the permittivity of the system do not affect the existence of an interface mode. In addition, using the transfer matrix method (explained in Section \ref{subsection:Transfer_matrix_method}), we can get an explicit expression of the solution to \eqref{eq:Helmholtz:pert} with respect to the initial data $(u(0),u'(0))$. Because of the continuous dependence of the solution on $\d$, it is direct that, as $\d\to0$, this solution converges to the one of $\eqref{eq:Helmholtz}$. This translates to the following lemma.

\begin{lemma}\label{lemma:IM_cvg:perm_pert}
    Let $u$ denote the asymptotically decaying interface mode of \eqref{eq:Helmholtz} in the band gap $\mathfrak{A}$ and let $u_{\d}$ denote an asymptotically decaying interface mode associated to the permittivity perturbed problem \eqref{eq:Helmholtz:pert} in the band gap $\mathfrak{A}_{\d}$. Then, if $\mathfrak{A} \cap \mathfrak{A}_{\d}\ne\empty$, it holds that
    \begin{align*}
        \lim_{\d\to0}u_\d=u.
    \end{align*}
\end{lemma}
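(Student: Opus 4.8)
The plan is to read off the interface mode from the transfer-matrix representation of Section~\ref{subsection:Transfer_matrix_method} and to observe that every ingredient in that representation depends continuously on the perturbation parameter $\d$. Since an interface mode is determined only up to a scalar multiple, I would first fix a normalisation — for instance, normalise the initial data vector $(u(0),u'(0))^{\top}$ to unit length with a fixed phase convention — so that the statement $\lim_{\d\to0}u_\d=u$ is meaningful. The argument then splits into a frequency-convergence step and a profile-convergence step.

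For the first step, I would note that the segment matrices in \eqref{def:segmentmatrix}, and hence the periodic transfer matrices $T_p^{[A]},T_p^{[B]}$ assembled from them in \eqref{def:transfermatrix}, have entries built from $\cos$, $\sin$ and $\sqrt{\,\cdot\,}$ applied to $\m_0\widetilde{\ve}(x,\w)=\m_0\big(\ve(x,\w)+\d f(\w)\big)$; these are jointly continuous in $(\w,\d)$, so $T_p^{[j]}(\w,\d)\to T_p^{[j]}(\w,0)$ uniformly for $\w$ in the compact band gap. Next I would locate the perturbed interface frequency: the perturbed mode sits at the unique $\w_m^\d\in\mathfrak{A}_\d$ solving $\widetilde{Z}^+(\w)+\widetilde{Z}^-(\w)=0$ (Lemma~\ref{lemma:impedance}), and by Theorem~\ref{thm:pertZ:decrease} this impedance sum is strictly decreasing in $\w$ for small $\d$. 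A zero of a continuous, strictly monotone family of functions that converges to $Z^{+}+Z^{-}$ depends continuously on the parameter, so $\w_m^\d\to\w_m$, the unperturbed interface frequency of Theorem~\ref{thm:bulksum}; the hypothesis $\mathfrak{A}\cap\mathfrak{A}_\d\neq\emptyset$ guarantees that $\w_m^\d$ remains in the common gap for $\d$ small.

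For the second step I would push this convergence through the transfer-matrix representation. By Theorem~\ref{thm:AB:spectral} and its corollary, the initial data $(u_\d(0),u_\d'(0))^{\top}$ is, up to scaling, the eigenvector $(V_{11}^{[B]},-V_{21}^{[B]})^{\top}$ of $T_p^{[B]}(\w_m^\d,\d)$ belonging to the eigenvalue $\l_1^{[B]}$ of modulus strictly below one. Within the band gap this eigenvalue is real and separated from $\l_2^{[B]}$, hence simple, so the associated eigenvector depends continuously on the matrix entries; combining with the first step gives $(u_\d(0),u_\d'(0))\to(u(0),u'(0))$ after normalisation. Finally, for any fixed $x$ the value $u_\d(x)$ is produced from this initial data by finitely many segment matrices within a cell together with powers of $T_p^{[j]}(\w_m^\d,\d)$, each continuous in the converging data, so $u_\d(x)\to u(x)$, and the uniform exponential decay from the corollary to Theorem~\ref{thm:AB:spectral} upgrades pointwise convergence to convergence of the whole mode.

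The main obstacle is the frequency-convergence step together with the bookkeeping that ensures the limit is genuinely the unperturbed interface mode rather than some other decaying solution. This is precisely what the strict monotonicity of Theorem~\ref{thm:pertZ:decrease} delivers: it forces the zero of the impedance sum to be unique and stable under the perturbation, while the eigenvector convergence in the profile step needs only the simplicity of the decaying eigenvalue, which holds throughout the band gap.
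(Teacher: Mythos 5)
Your proposal is correct and follows essentially the same route as the paper, which proves this lemma only by remarking that the transfer-matrix representation gives the solution explicitly in terms of the initial data $(u(0),u'(0))$ and that everything depends continuously on $\d$. Your write-up fills in the details the paper leaves implicit --- the normalisation convention, the convergence of the interface frequency $\w_m^\d\to\w_m$ via the strict monotonicity of $\widetilde{Z}^++\widetilde{Z}^-$ (Theorem~\ref{thm:pertZ:decrease}), and the continuity of the decaying eigenvector of $T_p^{[B]}$ guaranteed by the simplicity of $\l_1^{[B]}$ in the gap --- so it is a strictly more complete version of the paper's argument rather than a different one.
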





\subsubsection{Symmetry perturbation} \label{subsubsection:theory:pos_pert}

Let us now introduce a structural perturbation to the system. This will result in breaking the symmetry inside each periodic cell. This phenomenon is easy to produce.

We have considered two semi-infinite materials $A$ and $B$, each one constituted by a unit cell repeated periodically and glued together at $x_0=0$. We have assumed that each periodic cell is made of two different kinds of particles, one with permittivity $\ve_1(\w)$ and one with permittivity $\ve_2(\w)$, denoted by $D^{[1]}_i$ and $D^{[2]}_j$, with $i=1,\dots,N+1$ and $j=1,\dots,N$, respectively. The particles are ordered in the following way inside each periodic cell:
\begin{align*}
    D^{[1]}_1 \ - \ D^{[2]}_1 \ - \ D^{[1]}_2 \ - \ D^{[2]}_2 \ - \ \dots \ - \ D^{[2]}_N \ - \ D^{[1]}_{N+1}.
\end{align*}
We call the position perturbation parameter $\s>0$ and we apply the following procedure. In the material $A$, we increase the size of the particle $D^{[1]}_{N+1}$ by $\s$ and we decrease the size of the particle $D^{[1]}_{\frac{\lfloor N+2 \rfloor}{2}}$ by $\s$. Then, in a similar way, in the material $B$ we decrease the size of particle $D^{[1]}_1$ by $\s$ and we increase the size of $D^{[1]}_{\frac{\lfloor N+2 \rfloor}{2}}$ by $\s$. Hence, we manage to break the mirror symmetry governing each material. We will view this more analytically later in Figure \ref{fig:materials_A_B:pos_pert}, where we have considered a specific example for the model.

Let us note here that the position perturbation $\s$ cannot take arbitrarily big values, as the size of the periodic cell has to remain unaffected. In fact, for this to happen, we require that
\begin{align}\label{eq:sigma:upper_bound}
    \s \leq  \min_{A,B} \Big\{ |D^{[1]}_1|, \Big|D^{[1]}_{\frac{\lfloor N+2 \rfloor}{2}}\Big| \Big\}, 
\end{align}
where $\min_{A,B}$ denotes the minimum over materials $A$ and $B$, and $|\cdot|$ denotes the size of each particle. Since we work in the one-dimensional setting, $|\cdot|$ denoted the length.

Once again, as in Section \ref{subsubsection:theory:perm_pert}, using the transfer matrix method, we can see the continuous dependence that the decay mode has on the length of each particle. Thus, as a direct result, the following Lemma holds.

\begin{lemma}\label{lemma:IM_cvg:pos_pert}
    Let $u$ denote the asymptotically decaying interface mode of \eqref{eq:Helmholtz} in the band gap $\mathfrak{A}$ and let $u_{\s}$ denote an asymptotically decaying interface mode associated to the position perturbation procedure described above in the band gap $\mathfrak{A}_{\s}$. Then, if $\mathfrak{A} \cap \mathfrak{A}_{\s} \ne \emptyset$, it holds that
    \begin{align*}
        \lim_{\s\to0} u_{\s} = u.
    \end{align*}
\end{lemma}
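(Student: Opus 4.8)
The plan is to establish the convergence $\lim_{\s\to0}u_\s=u$ by exploiting the explicit structure of the transfer matrix machinery developed in Section~\ref{subsection:Transfer_matrix_method}, together with the continuous dependence of the segment matrices $\mathscr{T}(l_x,\w)$ on the lengths $l_x$. The essential point is that the position perturbation only alters particle lengths, and from \eqref{def:segmentmatrix} each segment matrix depends analytically on its length parameter $l_x$; since the per-period transfer matrices $T_p^{[A]},T_p^{[B]}$ are finite products of such segment matrices, they too depend continuously (indeed analytically) on $\s$. I would therefore denote by $T_{p,\s}^{[A]},T_{p,\s}^{[B]}$ the perturbed per-period transfer matrices and first record that $T_{p,\s}^{[j]}\to T_p^{[j]}$ as $\s\to0$, uniformly for $\w$ in a compact subset of the relevant band gap.

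First I would verify that the spectral picture persists for small $\s$. Because the eigenvalues $\l_1^{[j]},\l_2^{[j]}$ of $T_p^{[j]}$ are real and separated, with $|\l_1^{[j]}|<1<|\l_2^{[j]}|$ strictly, a standard continuity-of-eigenvalues argument (the eigenvalues are roots of $\det(T_{p,\s}^{[j]}-\l I)=0$, a polynomial whose coefficients depend continuously on $\s$) shows that for $\s$ small the perturbed matrices $T_{p,\s}^{[j]}$ still have one eigenvalue inside and one outside the unit circle. Consequently the associated contracting eigenvectors $(V_{11,\s}^{[j]},V_{21,\s}^{[j]})^\top$ converge to the unperturbed ones $(V_{11}^{[j]},V_{21}^{[j]})^\top$. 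This is exactly the data that determines the localised mode through Theorem~\ref{thm:AB:spectral} and its corollary.

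Next I would use the hypothesis $\mathfrak{A}\cap\mathfrak{A}_\s\neq\emptyset$ together with Theorem~\ref{thm:bulksum}: within the common band gap the interface frequency is the \emph{unique} zero of the continuous, strictly decreasing impedance sum $Z_\s:=Z^-_\s+Z^+_\s$. Since the impedance functions are built from the same transfer-matrix data (the contracting eigenvector fixes $(u_\s(0),u_\s'(0))$ up to scale, hence fixes $Z^\pm_\s$), the convergence of the eigenvectors yields $Z_\s\to Z$ locally uniformly, and the strict monotonicity (a stable, open condition under small perturbations) guarantees that the unique root $\w_{m,\s}$ of $Z_\s$ converges to the unique root $\w_m$ of $Z$. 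With the eigenfrequency and the initial data vector both converging, the explicit reconstruction $\tilde{u}_\s(x_n)=(\l_{1,\s}^{[B]})^n(V_{11,\s}^{[B]},V_{21,\s}^{[B]})^\top$ for $n>0$ (and the symmetric formula for $n<0$) furnishes the pointwise, then locally uniform, convergence $u_\s\to u$.

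The main obstacle I anticipate is controlling the convergence \emph{uniformly in the spatial variable}, not merely at each node $x_n$: the decay rates $|\l_{1,\s}^{[j]}|^{|n|}$ vary with $\s$, so one must rule out a loss of uniform decay as $\s\to0$. This is handled by noting that on the common gap $\mathfrak{A}\cap\mathfrak{A}_\s$ the contracting eigenvalues stay bounded away from the unit circle, giving a $\s$-independent geometric bound $|\l_{1,\s}^{[j]}|\le\rho<1$, which dominates the tails and legitimises passing to the limit. A secondary subtlety is that the gaps themselves move with $\s$, so one should confirm that $\mathfrak{A}\cap\mathfrak{A}_\s$ remains a nonempty interval containing a neighbourhood of $\w_m$ for all sufficiently small $\s$; this again follows from continuity of the band edges (the zeros of the discriminant in \eqref{eq:disp_rel}) in $\s$.
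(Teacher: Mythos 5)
Your overall strategy --- continuity of the segment matrices \eqref{def:segmentmatrix} in the lengths, hence of $T_p^{[A]},T_p^{[B]}$ in $\s$, persistence of the hyperbolic splitting $|\l_1^{[j]}|<1<|\l_2^{[j]}|$, convergence of the contracting eigenvectors, and a $\s$-uniform geometric bound $|\l_{1,\s}^{[j]}|\le\rho<1$ --- is exactly the mechanism the paper has in mind: its own justification is the single remark that, by the transfer matrix method, the decaying mode depends continuously on the particle lengths. In that respect you have supplied considerably more detail than the paper itself.

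However, there is one genuine flaw: you invoke Theorem~\ref{thm:bulksum} for the $\s$-perturbed structure, and that theorem is not available there. The position perturbation destroys the mirror symmetry of the unit cells, and everything feeding into Theorem~\ref{thm:bulksum} --- the bulk indices \eqref{def:bulk}, the symmetric/anti-symmetric dichotomy of Lemma~\ref{lemma:modesymmetry}, Lemma~\ref{lemma:u,u'}, and the band-edge impedance values of Lemma~\ref{lemma:Z:eval} --- presupposes that symmetry. The paper stresses precisely this point right after the lemma: the existence argument of Theorem~\ref{thm:bulksum} is not applicable in the position-perturbed regime, which is why Lemma~\ref{lemma:IM_cvg:pos_pert} is stated conditionally (``let $u_\s$ denote an asymptotically decaying interface mode\dots''). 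The repair is easy and stays within your framework: existence of $u_\s$ and of its frequency $\w_{m,\s}$ is a hypothesis of the lemma, so you need no existence theorem at all; uniqueness of the zero of $Z_\s:=Z_\s^-+Z_\s^+$ in the gap and the root convergence $\w_{m,\s}\to\w_m$ follow from Lemma~\ref{lemma:impedance} together with impedance monotonicity, whose Wronskian proof (Appendix~\ref{app:monotonicity}) uses only the decay of the field in the gap and $\frac{\upd\ve}{\upd\w}\ge0$, neither of which is affected by changing particle lengths --- it never uses the mirror symmetry. (Alternatively, run the root-convergence argument directly on the matching condition of Theorem~\ref{thm:AB:spectral}, whose left-hand side converges locally uniformly in $\w$ by your eigenvector convergence.) With that substitution your proof is sound; a last cosmetic point is that $\lim_{\s\to0}u_\s=u$ can only hold after fixing a consistent normalisation of the eigenmodes, e.g. scaling $(u_\s(0),u_\s'(0))^{\top}$ to be the unit contracting eigenvector of the perturbed transfer matrix of material $B$.
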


\subsection{Numerical results}

Let us now provide an example, in order to solidify the previous analysis. We consider dispersive particles with permittivity given by
\begin{align}\label{def:ex:permittivity}
    \ve(x,\w) = 
    \begin{cases}
        \ve_1(\w), & x \in D^{[1]},\\
        \ve_2(\w), & x \in D^{[2]}, 
    \end{cases} 
    \ \ \text{ where } \ \ 
    \ve_i(\w) := \ve_0 + \frac{\a_i}{1-\b_i\w^2}, \ \ \text{ for } \ i=1,2,
\end{align}
with $\ve_0,\a_i,\b_i\geq0$, for $i=1,2$. This permittivity function model is inspired from the examples treated in \cite{alexopoulos2023effect} and represents the undamped case of a Drude material or a halide perovskite. 
In order to obtain analytic results we consider the model for the periodic cells of materials $A$ and $B$ described in Figure \ref{fig:materials_AandB}. We note that, in this case, each periodic cell is constituted by five particles, three with permittivity $\ve_1(\w)$ and two with permittivity $\ve_2(\w)$. The particles have the following ordering:
\begin{align*}
    D^{[1]}_1 \ - \ D^{[2]}_1 \ - \ D^{[1]}_2 \ - \ D^{[2]}_2 \ - \ D^{[1]}_3.
\end{align*}
Let us denote by $\w_{i,p}^{(\pm)}$ the poles of the permittivity \eqref{def:permittivity}, i.e.
\begin{align}\label{def:poles}
    \w_{i,p}^{(\pm)} = \pm \frac{\sqrt{\b_i}}{\b_i}.
\end{align}
We see that
\begin{align*}
    \frac{\upd\ve}{\upd\w}(x,\w) = \begin{cases}
        \frac{\upd\ve_1}{\upd\w}(\w), \ \ &x \in D^{[1]},\\
        \frac{\upd\ve_2}{\upd\w}(\w), \ \ &x \in D^{[2]}.
    \end{cases}
\end{align*}
Thus, it holds 
\begin{align*}
    \frac{\upd \ve_1}{\upd\w}(\w) &= \frac{\upd}{\upd\w} \left( \ve_0 + \frac{\a_1}{1-\b_1\w^2} \right) = \frac{2\a_1\b_1\w}{(1-\b_1\w^2)^2} \geq 0 
\end{align*}
and similarly
\begin{align*}
    \frac{\upd \ve_2}{\upd\w}(\w) \geq 0,
\end{align*}
which gives
\begin{align*}
    \frac{\upd\ve}{\upd\w}(x,\w) \geq 0, \ \ x\in\mathbb{R}.
\end{align*}
Hence, Theorem \ref{thm:bulksum} holds. This implies that if we consider a one-dimensional photonic crystal of the form of Figure \ref{fig:materials_AandB} with permittivity given by \eqref{def:ex:permittivity}, then, in each band-gap, a topologically protected interface mode exists.


\subsubsection{Permittivity perturbation}

As mentioned previously, we will first study perturbations on the material parameters while keeping the symmetry inside each periodic cell intact. This occurs by introducing a perturbation to the permittivity of the particles and it gives rise to the perturbed system described by \eqref{eq:Helmholtz:pert}.

We can see that if the perturbation function $f$ is constant, then the permittivity is not strongly affected. Hence we will consider perturbation functions which are dispersive with respect to the frequency $\w$. In order to simplify the analysis we will take $f_1(\w)=f_2(\w)$, for all $\w\in\mathbb{R}$. 

We will study the perturbation effect for two separate cases of perturbation functions:
\begin{enumerate}
    \item Perturbation functions which satisfy the assumptions of the original permittivity. In particular, we will consider the following:
    \begin{align} \label{def:perturbationfunction:resp}
        f(\w) = -\frac{1}{\w^2}.
    \end{align}
    \item Perturbation functions which does not satisfy the assumptions of the original permittivity. In particular, we will consider the following:
    \begin{align} \label{def:perturbationfunction:disresp}
        f(\w) = \frac{1}{\w^2}.
    \end{align}
\end{enumerate}

For both cases, we see directly the smoothness of the perturbation function. We will also work inside the first band gap of the problem \eqref{eq:Helmholtz:pert}. This region is away from zero, so we ensure that $\frac{\partial f}{\partial\w}<+\infty$.

For the first case, we observe that as the perturbation parameter $\d$ increases, the size of the band gap increases. Because of the monotonicity of the perturbation function $\f$, Theorem \eqref{thm:bulksum} holds and so a unique interface mode exists. This fact is shown in Figure \ref{fig:permittivity_perturbation}, where we plot for each value of $\d$ the value of the frequency $\w$ of the interface mode. We observe that as $\d$ increases, the interface mode frequency increases as well, which is natural, since the size of the band gap increases. We also note that, as $\d\to0$, the interface mode frequency converges to the one of the unperturbed interface mode, as shown in Lemma \ref{lemma:IM_cvg:perm_pert}.

\begin{figure}
\begin{center}
\begin{tikzpicture}
        \node[inner sep=0pt] (russell) at (4.2,6.5) {\includegraphics[width=0.7\textwidth]{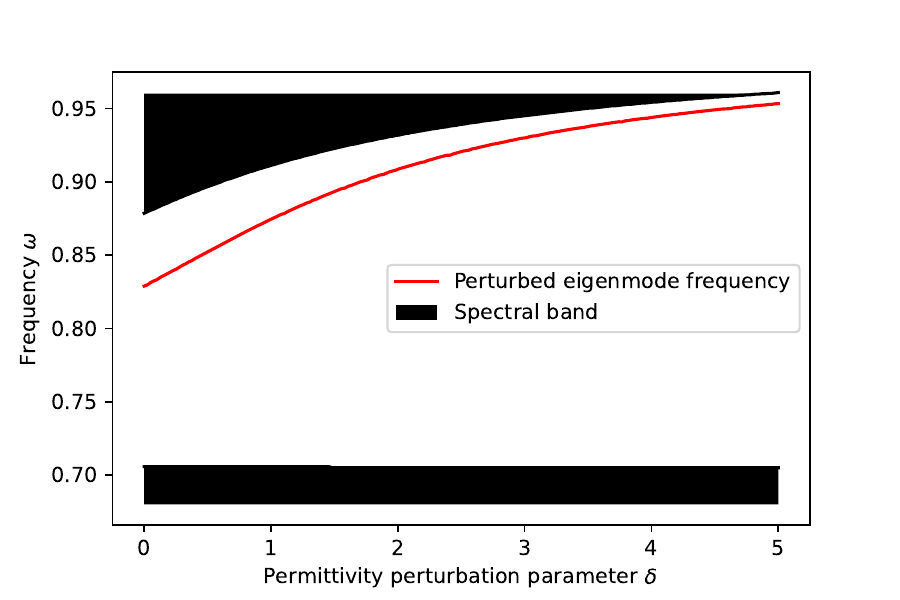}};
    \end{tikzpicture}
\end{center}
\caption{Plot of the frequencies of the interface mode and the edges of the band gap as perturbations with magnitude $\d$ that preserve the monotonicity condition are added. In particular, $\d$ times the strictly increasing function \eqref{def:perturbationfunction:resp} is added to the permittivity \eqref{def:ex:permittivity}. This function is in full accordance with the initial assumptions on the material permittivities. We see that as $\d\to0$, the perturbed interface mode frequency converges to the unperturbed one, as stated in Lemma \ref{lemma:IM_cvg:perm_pert}. As $\d$ increases, the size of the band gap increases and the interface mode frequency increases but still remains inside the band gap.} 
\label{fig:permittivity_perturbation}
\end{figure}

Let us now move to the second case. It holds that as $\d$ increases, the size of the band gap decreases. This happens because the perturbation function $f$, which is decreasing, becomes the dominating factor of the perturbed permittivity $\widetilde{\ve}$. In Figure \ref{fig:permittivity_perturbation_break}, we observe that as $\d$ increases, the interface mode frequency $\w$ decreases. In particular, for large values of $\d$, the band gap becomes very small. On the other hand, as $\d\to0$, the original permittivity is still the dominating factor of the perturbed permittivity and so, from Theorem \ref{thm:bulksum}, the interface mode still exists and converges to the unperturbed one.

This closing of the band gap poses problems for the physical implementation of these systems. In particular, since the edge mode approaches the edge of the band gap, its eigenfrequency will be close to that of a propagating Bloch mode. As a result, in any physical device, it is likely than any attempt to use this localised mode for wave guiding will fail as the nearby propagating mode will also be excited.


\begin{figure}
\begin{center}
\begin{tikzpicture}
        \node[inner sep=0pt] (russell) at (-4.2,6.5) {\includegraphics[width=0.7\textwidth]{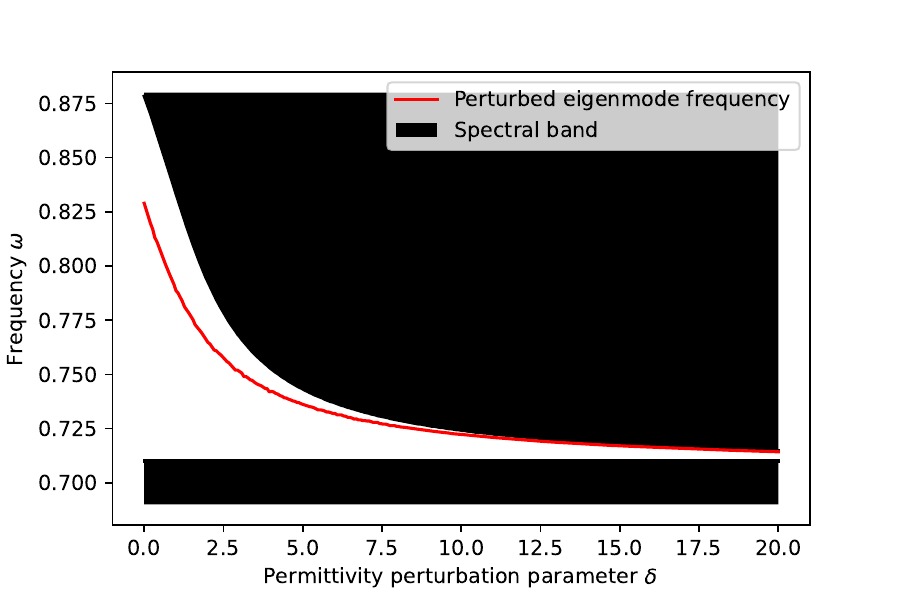}};
    \end{tikzpicture}
\end{center}
\caption{Plot of the frequencies of the interface mode and the edges of the band gap as perturbations with magnitude $\d$ that do not preserve the monotonicity condition are added. In particular, $\d$ times the strictly decreasing function \eqref{def:perturbationfunction:disresp} is added to the permittivity \eqref{def:ex:permittivity}. As a result, when $\d$ is sufficiently large, the permittivity function does not respect the monotonicity assumption. Of course, we see that as $\d\to0$, the perturbed interface mode frequency converges to the unperturbed one, as stated in Lemma \ref{lemma:IM_cvg:perm_pert}. As $\d$ increases, the width of the band gap decreases. As a result, the perturbed interface mode frequency gets pushed outside of the band gap. Hence, a localised interface mode no longer exists for frequency $\w$ in this band gap. The shaded areas are the spectral bands and the red line represents the frequency of the perturbed interface mode as the perturbation parameter $\d$ increases. We observe that the band gap becomes small for large values of $\d$ and that the perturbed interface mode frequency is pushed very close to the edge of the gap.
} 
\label{fig:permittivity_perturbation_break}
\end{figure}

\subsubsection{Symmetry perturbation}

Let us now treat the case where the symmetry of the periodic cells breaks. We will apply the procedure described in Section \ref{subsubsection:theory:pos_pert} on the model described in Figure \ref{fig:materials_AandB}. In particular, for material $A$, we reduce the size of the particle $D^{[1]}_2$ by $\s$ and we increase the size of particle $D^{[1]}_3$ by $\s$. Similarly, for material $B$, we decrease the size of $D^{[1]}_1$ by $\s$ and we increase the size of $D^{[1]}_2$ by the same amount. We depict this position perturbation graphically for the periodic cell of material $A$ and $B$ in Figure \ref{fig:materials_A_B:pos_pert}.

As it is already mentioned, the position perturbation parameter $\s$ cannot take arbitrarily large values, since the length of the periodic cell has to remain the same. Thus, from the bound \eqref{eq:sigma:upper_bound}, we see that
\begin{align*}
    0\leq\s\leq\th_2.
\end{align*}

\begin{figure}
    \centering
    \begin{tikzpicture}
        \node[inner sep=0pt] (russell) at (-4.5,6.5) {\includegraphics[width=0.47\textwidth]{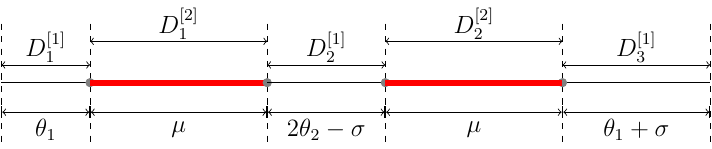}};
        \node at (-4.5,5.25) {\scriptsize Perturbed material A};
        \node[inner sep=0pt] (russell) at (4.5,6.5) {\includegraphics[width=0.47\textwidth]{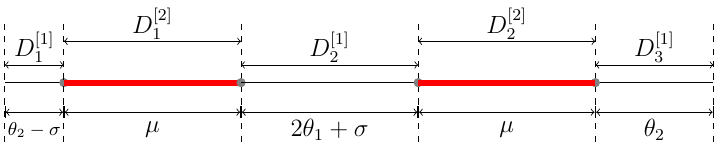}};
        \node at (4.5,5.25) {\scriptsize Perturbed material B};
    \end{tikzpicture}
    \caption{Position perturbation on the periodic cells of each material. We have considered as values $\th_1=0.1$, $\th_2=0.15$ and $\m=0.25$. We notice that the largest possible value for $\s$ is $0.15$. We see that by modifying the sizes of the particles, the mirror symmetry inside the unit cell no longer holds.} 
    \label{fig:materials_A_B:pos_pert}
\end{figure}

Studying the existence of an interface mode under the position perturbation regime is not in accordance with the argument we have presented in Theorem \ref{thm:bulksum}. In fact, our argument is based on the mirror symmetry governing the periodic cells and proposes a sufficient but not necessary condition for an interface mode to exist in a band gap. This symmetry breaks once the position perturbation takes place. What we can say with certainty is that, if the interface mode exists, then, as the position perturbation parameter $\s\to0$, it converges to the unperturbed interface mode (Lemma \ref{lemma:IM_cvg:pos_pert}). This result is shown in Figure \ref{fig:position_perturbation_break}, where we have plotted the interface mode frequency $\w$ as a function of the position perturbation parameter $\s$ inside a band gap.

\begin{figure}
\begin{center}
\begin{tikzpicture}
        \node[inner sep=0pt] (russell) at (4.2,6.5) {\includegraphics[width=0.7\textwidth]{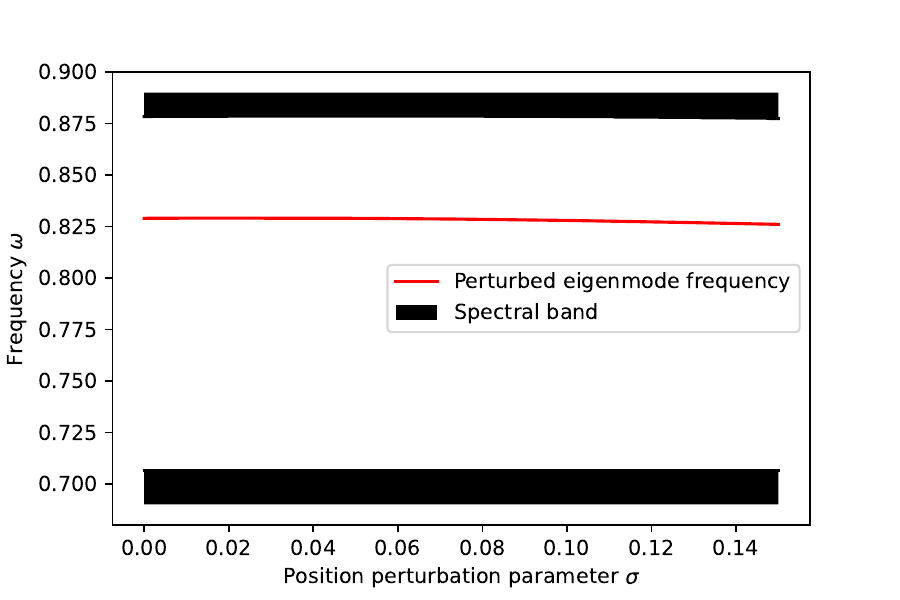}};
    \end{tikzpicture}
\end{center}
\caption{Plot of the interface mode frequency $\w$ as the position perturbation parameter $\s$ increases. We notice that as $\s\to0$, the perturbed interface mode frequency converges to the one of the unperturbed system. Also, we see that as $\s$ increases, the break of the symmetry inside the unit cell is more drastic and so, the perturbed interface mode frequency diverges from the unperturbed one. Although, this divergence is quite small and the perturbed interface mode frequency remains inside the band gap. We note that the shaded areas denote the spectral bands.} 
\label{fig:position_perturbation_break}
\end{figure}

\section{Conclusion}

Studying a one-dimensional system for dispersive materials, we used the mirror symmetry and the underlying periodic structure of our setting to show that localised interface modes exist. We explained the behaviour of the Zak phase, a topological invariant of the system, and we have provided the condition that the interface mode frequency needs to satisfy for these modes to decay at infinity. Finally, we introduced imperfections to the system in the form of perturbations on the material permittivity and the symmetry of the periodic cells and we showed how the decaying modes and their associated frequencies are affected.


\section*{Conflicts of interest}

The authors have no conflicts of interest to disclose.

\section*{Acknowledgements}

The work of KA was supported by ETH Z\"urich under the project ETH-34 20-2. The work of BD was funded by the Engineering and Physical Sciences Research Council through a fellowship with grant number EP/X027422/1.

\appendix

\section{Proofs of impedance monotonicity} \label{app:monotonicity}

\subsection{Proof of Theorem \ref{thm:Z:decrease}}

\begin{proof}[Proof of Theorem~\ref{thm:Z:decrease}]
    
We assume that we are working in a band gap $\mathfrak{A}$. We apply $\frac{\upd}{\upd \w}$ on both sides of \eqref{eq:Helmholtz} and we get, using \eqref{def:E,phi}, that
\begin{align*}
    \frac{\upd}{\upd \w} \left[ \frac{\upd}{\upd x} \left( E(x,\w) \frac{\upd u}{\upd x} \right) \right] = \frac{\upd}{\upd\w}\Big( - \m_0 \w^2 u \Big),
\end{align*}
which is
\begin{align*}
    \frac{\upd}{\upd x} \left[ \frac{\upd}{\upd \w} \left( E(x,\w) \frac{\upd u}{\upd x} \right) \right] = - 2 \m_0 \w u - \m_0 \w^2 \phi
\end{align*}
and so
\begin{align}\label{eq:Helmholtz:deriv}
    \frac{\upd}{\upd x} \left[ \frac{\upd E}{\upd \w} (x,\w) u' + E(x,\w) \f' \right] = - 2 \m_0 \w u - \m_0 \w^2 \phi.
\end{align}
Also, we have
\begin{align*}
    \frac{\upd Z^+}{\upd\w} = \frac{\upd}{\upd\w} \left( \frac{u}{E u} \right) = \frac{1}{(Eu')^2} \left[ E u' \f - u \left( \frac{\upd E}{\upd\w} u' + E \f' \right) \right].
\end{align*}
Let us define the Wronskian $\mathcal{W}$ by
\begin{align}
    \mathcal{W} := E u' \f - u \left( \frac{\upd E}{\upd\w} u' + E \f' \right).
\end{align}
Then, we get
\begin{align*}
    \frac{\upd Z_R}{\upd\w} = \frac{\mathcal{W}}{(Eu')^2}.
\end{align*}
We observe that
\begin{align*}
    \mathcal{W}' &= \frac{\upd W}{\upd x} = \frac{\upd}{\upd x} \left[  E u' \f - u \left( \frac{\upd E}{\upd\w} u' + E \f' \right) \right] \\
    &= E' u' \f + E (u''\f + u'\f') - u' \left( \frac{\upd E}{\upd\w} u' + E \f' \right) - u \left( \frac{\upd E}{\upd\w} u' + E \f' \right)'\\
    &= E' u' \f + E u''\f + E u'\f' - (u')^2 \frac{\upd E}{\upd\w} - E u' \f' - u \left( \frac{\upd E}{\upd\w} u' + E \f' \right)'\\
    &= E' u' \f + E u''\f - (u')^2 \frac{\upd E}{\upd\w} - u \left( \frac{\upd E}{\upd\w} u' + E \f' \right)'\\
    &= \f (Eu')' - u \left( \frac{\upd E}{\upd\w} u' + E \f' \right)' - (u')^2 \frac{\upd E}{\upd\w}.
\end{align*}
Applying \eqref{eq:Helmholtz} and \eqref{eq:Helmholtz:deriv}, we get
\begin{align*}
    \mathcal{W}' &= - \f \m_0 \w^2 u - u ( - 2 \w \m_0 u - \m_0 \w^2 \f ) - (u')^2 \frac{\upd E}{\upd\w}\\
    &= 2 \m_0 \w u^2- (u')^2 \frac{\upd E}{\upd\w},
\end{align*}
which gives
\begin{align}
    \mathcal{W}' = 2 \m_0 \w u^2- (u')^2 \frac{\upd E}{\upd\w}.
\end{align}
Integrating with respect to $x$ over the material $B$, i.e. on $[0,+\infty]$, we have
\begin{align*}
    \lim_{x\to+\infty} \mathcal{W}(x,w) - \mathcal{W}(0,w) = 2 \m_0 \w \int_0^{+\infty} u^2 \upd x - \int_0^{+\infty} (u')^2 \frac{\upd E}{\upd\w} \upd x.
\end{align*}
We know that when the frequency is inside a band gap, the field vanishes as $x\to+\infty$. This implies that
\begin{align*}
    \lim_{x\to+\infty} \mathcal{W}(x,w) = 0,
\end{align*}
and so
\begin{align*}
     \mathcal{W}(0,w) = \int_0^{+\infty} (u')^2 \frac{\upd E}{\upd\w} \upd x - 2 \m_0 \w \int_0^{+\infty} u^2 \upd x.
\end{align*}
We know that $\frac{\upd\ve}{\upd\w}\geq0$. This gives
\begin{align*}
    \frac{\upd E}{\upd\w} = \frac{\upd}{\upd\w}\left(\frac{1}{\ve}\right) = \frac{-1}{\ve^2} \frac{\upd\ve}{\upd\w} \leq 0.
\end{align*}
Hence,
\begin{align*}
    \int_0^{+\infty} (u')^2 \frac{\upd E}{\upd\w} \upd x \leq 0.
\end{align*}
Therefore, we get
\begin{align*}
    \frac{\upd Z^+}{\upd \w} = \frac{1}{(Eu')^2}\left(-2\m_0\w\int_0^{+\infty} u^2 \upd x + \int_{0}^{+\infty} (u')^2 \frac{\upd E}{\upd\w} \upd x \right)<0.
\end{align*}
We apply the same argument for $Z^-$ and we obtain similarly that
\begin{align*}
    \frac{\upd Z^-}{\upd \w}  < 0.
\end{align*}
This concludes the proof.

\end{proof}

\subsection{Proof of Theorem \ref{thm:pertZ:decrease}}

\begin{proof}[Proof of Theorem \ref{thm:pertZ:decrease}]
    The proof follows the same reasoning as the one of Lemma \ref{lemma:impedance}. We first define $\widetilde{E}$ by
    \begin{align*}
        \widetilde{E}(x,\w) := \frac{1}{\widetilde{\ve}(x,\w)}.
    \end{align*}
    Applying $\frac{\upd}{\upd \w}$ on both sides of $\eqref{eq:Helmholtz:pert}$, we get
    \begin{align}\label{eq:Helmholtz:pert:deriv}
        \frac{\upd}{\upd x} \left[ \frac{\upd \widetilde{E}}{\upd \w} (x,\w) u' + \widetilde{E}(x,\w) \f' \right] = - 2 \m_0 \w u - \m_0 \w^2 \phi,
    \end{align}
    where $\phi:=\frac{\upd u}{\upd\w}$. Also, we have
    \begin{align*}
        \frac{\upd \widetilde{Z}^+}{\upd\w} = \frac{1}{(\widetilde{E}u')^2} \left[ \widetilde{E} u' \f - u \left( \frac{\upd \widetilde{E}}{\upd\w} u' + \widetilde{E} \f' \right) \right].
    \end{align*}
    We define the perturbed Wronskian $\widetilde{\mathcal{W}}$ by
    \begin{align}
        \widetilde{\mathcal{W}} := \widetilde{E} u' \f - u \left( \frac{\upd \widetilde{E}}{\upd\w} u' + \widetilde{E} \f' \right),
    \end{align}
    which gives
    \begin{align*}
        \frac{\upd \widetilde{Z}^+}{\upd\w} = \frac{\widetilde{\mathcal{W}}}{(\widetilde{E}u')^2}.
    \end{align*}
    It follows that
    \begin{align*}
        \widetilde{\mathcal{W}}' = \f (\widetilde{E}u')' - u \left( \frac{\upd \widetilde{E}}{\upd\w} u' + \widetilde{E} \f' \right)' - (u')^2 \frac{\upd \widetilde{E}}{\upd\w},
    \end{align*}
    and so, from \eqref{eq:Helmholtz:pert} and \eqref{eq:Helmholtz:pert:deriv}, we get
    \begin{align*}
        \widetilde{\mathcal{W}}' = 2 \m_0 \w u^2- (u')^2 \frac{\upd \widetilde{E}}{\upd\w}.
    \end{align*}
    Integrating with respect to $x$ over the perturbed material $B$, i.e. on $[0,+\infty]$, we have
    \begin{align*}
        \lim_{x\to+\infty} \widetilde{\mathcal{W}}(x,w) - \widetilde{\mathcal{W}}(0,w) = 2 \m_0 \w \int_0^{+\infty} u^2 \upd x - \int_0^{+\infty} (u')^2 \frac{\upd \widetilde{E}}{\upd\w} \upd x.
    \end{align*}
    We know that when in a band gap, the field vanishes as $x\to+\infty$. This implies that
    \begin{align*}
        \lim_{x\to+\infty} \widetilde{\mathcal{W}}(x,w) = 0,
    \end{align*}
    and so
    \begin{align*}
        \widetilde{\mathcal{W}}(0,w) = \int_0^{+\infty} (u')^2 \frac{\upd \widetilde{E}}{\upd\w} \upd x - 2 \m_0 \w \int_0^{+\infty} u^2 \upd x.
    \end{align*}
    We observe that
    \begin{align*}
        \frac{\upd\widetilde{E}}{\upd\w} = \frac{\upd}{\upd\w} \left(\frac{1}{\widetilde{\ve}(x,\w)}\right) = \frac{\upd}{\upd\w} \left(\frac{1}{\ve(x,\w)+\d f(\w)}\right) = \frac{-1}{\Big(\ve(x,\w)+\d f(\w)\Big)^2} \left( \frac{\upd\ve}{\upd\w} + \d \frac{\upd f}{\upd\w} \right).
    \end{align*}
    Thus, since $\frac{\partial f}{\partial \w}<+\infty$, we get
    \begin{align*}
        \lim_{\d\to0} \frac{\upd\widetilde{E}}{\upd\w} = \frac{-1}{\ve(x,\w)^2} \frac{\upd\ve}{\upd\w} \leq 0,
    \end{align*}
    since we have that $\frac{\upd\ve}{\upd\w}\geq0$. Hence,
    \begin{align*}
        \lim_{\d\to0} \int_0^{+\infty} (u')^2 \frac{\upd \widetilde{E}}{\upd\w} \upd x \leq 0,
    \end{align*}
    Therefore, at $x=0$, we get
    \begin{align*}
        \lim_{\d\to0}\frac{\upd \widetilde{Z}^+}{\upd \w} = \lim_{\d\to0} \frac{1}{(\widetilde{E}u')^2}\left(-2\m_0\w\int_0^{+\infty} u^2 \upd x + \int_{0}^{+\infty} (u')^2 \frac{\upd \widetilde{E}}{\upd\w} \upd x \right)<0.
    \end{align*}
    The same argument holds for $\widetilde{Z}^-$ and so, we obtain that
    \begin{align*}
        \frac{\upd \widetilde{Z}^-}{\upd \w}  < 0.
    \end{align*}
    This concludes the proof.
\end{proof}

\section{Characterisation of eigenmode symmetries} \label{app:symmetries}

\begin{proof}[Proof of Lemma~\ref{lemma:u,u'}]
    The first thing that we notice is that we cannot have $u_n^{[0]}(0) = \Big(u_n^{[0]}\Big)'(0) = 0$ or $u_n^{[\pi]}(0) = \Big(u_n^{[\pi]}\Big)'(0) = 0$, since by the uniqueness of solution to the problem \eqref{eq:Helmholtz}, it would mean $u_n\equiv0$. Then, from the Lemma \ref{lemma:modesymmetry}, we have two different cases. We will treat each one separately:
    \begin{itemize}
        \item We have $u_n$ being symmetric and $u'_n$ being anti-symmetric. At $\k=\pi$, from the symmetry of $u_n$, we have that
        \begin{align*}
            u_n^{[\pi]}(0) = \mathcal{P}u_n^{[\pi]}(0) = u_n^{[\pi]}(1)
        \end{align*}
        and from \eqref{bd1}, it holds that
        \begin{align*}
            u_n^{[\pi]}(0) = e^{i\pi} u_n^{[\pi]}(1) = - u_n^{[\pi]}(1). 
        \end{align*}
        Combining the two, we get
        \begin{align*}
            u_n^{[\pi]}(0) = 0.
        \end{align*}
        Then, at $\k=0$, since $u'_n$ is anti-symmetric, we have that
        \begin{align*}
            \Big(u_n^{[0]}\Big)'(0) = -\mathcal{P}\Big(u_n^{[0]}\Big)'(0) = - \Big(u_n^{[0]}\Big)'(1).
        \end{align*}
        But, from \eqref{bd1}, we have
        \begin{align*}
            \Big(u_n^{[0]}\Big)'(0) = e^0 \Big(u_n^{[0]}\Big)'(1) = \Big(u_n^{[0]}\Big)'(1).
        \end{align*}
        Combining the two, we get
        \begin{align*}
            \Big(u_n^{[0]}\Big)'(0) = 0.
        \end{align*}
        This gives the desired result.
        \item The second case is $u_n$ being anti-symmetric and $u'_n$ being symmetric. At $\k=0$, the anti-symmetry of $u_n$ gives
        \begin{align*}
            u_n^{[0]}(0) = - \mathcal{P}u_n^{[0]}(0) = -u_n^{[0]}(1)
        \end{align*}
        and the quasiperiodic boundary condition \eqref{bd1} gives
        \begin{align*}
            u_n^{[0]}(0) = e^{0} u_n^{[0]}(1) = u_n^{[0]}(1). 
        \end{align*}
        Combining the two, we get
        \begin{align*}
            u_n^{[0]}(0) = 0.
        \end{align*}
        Then, at $\k=\pi$, since $u'_n$ is symmetric, we have that
        \begin{align*}
            \Big(u_n^{[\pi]}\Big)'(0) = \mathcal{P}\Big(u_n^{[\pi]}\Big)'(0) =  \Big(u_n^{[\pi]}\Big)'(1).
        \end{align*}
        But, from \eqref{bd1}, we have
        \begin{align*}
            \Big(u_n^{[\pi]}\Big)'(0) = e^{i\pi} \Big(u_n^{[\pi]}\Big)'(1) = - \Big(u_n^{[\pi]}\Big)'(1).
        \end{align*}
        Combining the two, we get
        \begin{align*}
            \Big(u_n^{[\pi]}\Big)'(0) = 0.
        \end{align*}
        This gives the desired result.
    \end{itemize}
    This concludes the proof.
\end{proof}

\bibliographystyle{abbrv}
\bibliography{references}{}

\begin{thebibliography}{10}

\bibitem{alexopoulos2023effect}
K.~Alexopoulos and B.~Davies.
\newblock The effect of singularities and damping on the spectra of photonic
  crystals.
\newblock {\em arXiv preprint arXiv:2306.12254}, 2023.

\bibitem{ammari2022robust}
H.~Ammari, B.~Davies, and E.~O. Hiltunen.
\newblock Robust edge modes in dislocated systems of subwavelength resonators.
\newblock {\em Journal of the London Mathematical Society}, 106(3):2075--2135,
  2022.

\bibitem{ammari2020topologically}
H.~Ammari, B.~Davies, E.~O. Hiltunen, and S.~Yu.
\newblock Topologically protected edge modes in one-dimensional chains of
  subwavelength resonators.
\newblock {\em Journal de Math{\'e}matiques Pures et Appliqu{\'e}es},
  144:17--49, 2020.

\bibitem{ammari2020high}
H.~Ammari, E.~O. Hiltunen, and S.~Yu.
\newblock A high-frequency homogenization approach near the dirac points in
  bubbly honeycomb crystals.
\newblock {\em Archive for Rational Mechanics and Analysis}, 238:1559--1583,
  2020.

\bibitem{asboth2016short}
J.~K. Asb{\'o}th, L.~Oroszl{\'a}ny, and A.~P{\'a}lyi.
\newblock {\em A Short Course on Topological Insulators}, volume 919 of {\em
  Lecture Notes in Physics}.
\newblock Springer, 2016.

\bibitem{coutant2023surface}
A.~Coutant and B.~Lombard.
\newblock Surface impedance and topologically protected interface modes in
  one-dimensional phononic crystals.
\newblock {\em arXiv preprint arXiv:2307.11402}, 2023.

\bibitem{craster2023asymptotic}
R.~V. Craster and B.~Davies.
\newblock Asymptotic characterization of localized defect modes:
  {S}u--{S}chrieffer--{H}eeger and related models.
\newblock {\em Multiscale Modeling \& Simulation}, 21(3):827--848, 2023.

\bibitem{drouot2019bulk}
A.~Drouot.
\newblock The bulk-edge correspondence for continuous honeycomb lattices.
\newblock {\em Communications in Partial Differential Equations},
  44(12):1406--1430, 2019.

\bibitem{fefferman2017topologically}
C.~Fefferman, J.~Lee-Thorp, and M.~Weinstein.
\newblock {\em Topologically protected states in one-dimensional systems},
  volume 247.
\newblock American Mathematical Society, 2017.

\bibitem{fefferman2014topologically}
C.~L. Fefferman, J.~P. Lee-Thorp, and M.~I. Weinstein.
\newblock Topologically protected states in one-dimensional continuous systems
  and {Dirac} points.
\newblock {\em Proceedings of the National Academy of Sciences},
  111(24):8759--8763, 2014.

\bibitem{fefferman2014wave}
C.~L. Fefferman and M.~I. Weinstein.
\newblock Wave packets in honeycomb structures and two-dimensional {D}irac
  equations.
\newblock {\em Communications in Mathematical Physics}, 326:251--286, 2014.

\bibitem{khanikaev2013photonic}
A.~B. Khanikaev, S.~Hossein~Mousavi, W.-K. Tse, M.~Kargarian, A.~H. MacDonald,
  and G.~Shvets.
\newblock Photonic topological insulators.
\newblock {\em Nature materials}, 12(3):233--239, 2013.

\bibitem{klitzing1980QHE}
K.~v. Klitzing, G.~Dorda, and M.~Pepper.
\newblock New method for high-accuracy determination of the fine-structure
  constant based on quantized {Hall} resistance.
\newblock {\em Physical Review Letters}, 45(6):494, 1980.

\bibitem{kuchment2016overview}
P.~Kuchment.
\newblock An overview of periodic elliptic operators.
\newblock {\em Bulletin of the American Mathematical Society}, 53(3):343--414,
  2016.

\bibitem{lin2022mathematical}
J.~Lin and H.~Zhang.
\newblock Mathematical theory for topological photonic materials in one
  dimension.
\newblock {\em Journal of Physics A: Mathematical and Theoretical},
  55(49):495203, 2022.

\bibitem{morini2018waves}
L.~Morini and M.~Gei.
\newblock Waves in one-dimensional quasicrystalline structures: dynamical trace
  mapping, scaling and self-similarity of the spectrum.
\newblock {\em Journal of the Mechanics and Physics of Solids}, 119:83--103,
  2018.

\bibitem{PriceRoadmap}
H.~Price, Y.~Chong, A.~Khanikaev, H.~Schomerus, L.~J. Maczewsky, M.~Kremer,
  M.~Heinrich, A.~Szameit, O.~Zilberberg, Y.~Yang, B.~Zhang, A.~Alù,
  R.~Thomale, I.~Carusotto, P.~St-Jean, A.~Amo, A.~Dutt, L.~Yuan, S.~Fan,
  X.~Yin, C.~Peng, T.~Ozawa, and A.~Blanco-Redondo.
\newblock Roadmap on topological photonics.
\newblock {\em Journal of Physics: Photonics}, 4(3):032501, 2022.

\bibitem{schoenberg1983properties}
M.~Schoenberg and P.~Sen.
\newblock Properties of a periodically stratified acoustic half-space and its
  relation to a biot fluid.
\newblock {\em The Journal of the Acoustical Society of America}, 73(1):61--67,
  1983.

\bibitem{touboul2023high}
M.~Touboul, B.~Vial, R.~Assier, S.~Guenneau, and R.~Craster.
\newblock High-frequency homogenization for periodic dispersive media.
\newblock {\em arXiv preprint arXiv:2308.08559}, 2023.

\bibitem{wang2009observation}
Z.~Wang, Y.~Chong, J.~D. Joannopoulos, and M.~Solja{\v{c}}i{\'c}.
\newblock Observation of unidirectional backscattering-immune topological
  electromagnetic states.
\newblock {\em Nature}, 461(7265):772--775, 2009.

\bibitem{zhao2021subwavelength}
D.~Zhao, X.~Chen, P.~Li, and X.-F. Zhu.
\newblock Subwavelength acoustic energy harvesting via topological interface
  states in {1D} {H}elmholtz resonator arrays.
\newblock {\em AIP Advances}, 11(1), 2021.

\end{thebibliography}

\end{document}